\documentclass[twoside, 12pt]{article}
\usepackage{a4wide, amsthm, amssymb, amscd, mathptmx}

\usepackage{amsfonts}

\usepackage[all]{xy}\CompileMatrices\SelectTips{cm}{12}

\theoremstyle{plain}
\newtheorem{Thm}{\sc Theorem}[section]
\newtheorem{Theorem}[Thm]{\sc Theorem}
\newtheorem*{Theorem*}{\sc Theorem}
\newtheorem{Corollary}[Thm]{\sc Corollary}
\newtheorem*{Lemma*}{\sc Lemma}
\newtheorem{Proposition}[Thm]{\sc Proposition}
\newtheorem*{Proposition*}{\sc Proposition}
\newtheorem{Lemma}[Thm]{\sc Lemma}

\theoremstyle{definition}
\newtheorem{Definition}[Thm]{Definition}

\theoremstyle{remark}
\newtheorem{Remark}[Thm]{Remark}

\newtheorem*{Example*}{Example}
\newtheorem*{Remark*}{Remark}


\newcommand{\D}{{\cal D}}
\newcommand{\E}{{\cal E}}
\newcommand{\F}{{\cal F}}

\renewcommand{\H}{{\cal H}}
\renewcommand{\L}{{\cal L}}

\renewcommand{\O}{{\cal O}}

\newcommand{\HH}{\mathbb{H}}
\newcommand{\NN}{\mathbb{N}}
\newcommand{\PP}{\mathbb{P}}
\newcommand{\QQ}{\mathbb{Q}}
\newcommand{\RR}{\mathbb{R}}
\newcommand{\ZZ}{\mathbb{Z}}

\newcommand{\ch}{\mathop{\rm ch}}

\newcommand{\coker}{\mathop{\rm coker}}

\newcommand{\ext}{{\mathop{{\rm Ext}}}}

\newcommand{\GL}{\mathop{\rm GL}}

\newcommand{\Hilb}{{\mathop{\rm Hilb}}}

\renewcommand{\hom}{{\mathop{\rm Hom}}}
\newcommand{\id}{\mathop{\rm Id}}

\newcommand{\Num}{{\mathop{\rm Num}}}

\newcommand{\rk}{\mathop{\rm rk}}

\newcommand{\td}{\mathop{\rm td}}

\newcommand{\NS}{\mathop{\rm NS}}


\newcommand\arccosh{{\mathop{\rm arccosh}}}
\newcommand{\ccI}{{\cal I}}
\newcommand\mfm{{\mathfrak{m}}}

\begin{document}

\markboth{\rm M.\ Hauzer} {\rm On moduli spaces on Enriques
surfaces}

\title{On moduli spaces of semistable
sheaves on Enriques surfaces}
\author{Marcin Hauzer \footnote{The author tragically died on 27.01.2010.
The paper is a slightly edited version of a part of his
un-submitted PhD thesis written under supervision of Adrian
Langer.}}
\date{\today}

\maketitle

\begin{abstract}
We describe some one-dimensional moduli spaces of rank $2$
Gieseker semistable sheaves on an Enriques surface improving
earlier results of H. Kim. In case of a nodal Enriques surface the
obtained moduli spaces are reducible for general polarizations.

For unnodal Enriques surfaces we show how to reduce the study of
moduli spaces of high even rank Gieseker semistable sheaves to low
ranks. To prove this we use the method of K. Yoshioka who showed
that in the odd rank one can reduce to rank $1$.
\end{abstract}

\section*{Introduction}

An \emph{Enriques surface} is a smooth projective surface $X$
satisfying the following conditions:  the irregularity
$q(X)=h^1(\O_X)$ is equal to $0$ and the canonical line bundle
$\omega_X$ is non-trivial but $\omega_X^{\otimes 2}\simeq \O_X$.
For simplicity we assume that our surfaces are defined over an
algebraically closed field $k$ of characteristic zero (otherwise
we would have to change even the definition of an Enriques
surface).

One of the aims of this note is to study geometry of
one-dimensional moduli spaces of Gieseker semistable sheaves on
Enriques surfaces. We are particularly interested in case of rank
$2$ torsion free sheaves with the first Chern class equal to a
half-pencil of an Enriques surface and with the (degree of) second
Chern class equal to $1$.

Before formulating theorem let us recall that every Enriques
surface is an elliptic fibration over $\PP^1$ with two multiple
fibres $2F_A$ and $2F_B$. $F_A$ and $F_B$ are reduced curves and
they are called \textit{halfpencils}.

\begin{Theorem} \label{main}
There exists an explicit class of polarizations $H$ such that the
moduli space $M_X(2,F_A,1)$ of rank $2$ Gieseker $H$-semistable
sheaves with first Chern class $F_A$ and second Chern class $1$ is
isomorphic to $F_B$.
\end{Theorem}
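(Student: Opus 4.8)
The plan is to realize every rank-$2$ Gieseker $H$-semistable sheaf $E$ with $c_1(E)=F_A$ and $c_2(E)=1$ as an extension involving the halfpencil $F_B$, and then to show that the moduli space parametrizes exactly the data of a point of $F_B$. First I would compute the expected dimension of $M_X(2,F_A,1)$ via the Riemann–Roch/Mukai-pairing formula on an Enriques surface, checking that the Mukai vector $v=(2,F_A,0)$ (with the chosen normalization for $c_2=1$) satisfies $\langle v,v\rangle=0$, so the moduli space is one-dimensional, consistent with the target $F_B$. Next I would examine stability: since $F_A$ is a half-pencil, $F_A^2=0$ and $F_A\cdot H$ is small for the polarizations $H$ in the ``explicit class'', so the only destabilizing sub/quotient line bundles have classes forced by the arithmetic on $\NS(X)$; I expect that for $H$ in this class semistability coincides with stability and that $E$ is automatically locally free (or its double dual differs in a controlled way, contributing nothing new to the moduli space).

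The geometric heart of the argument is to produce the isomorphism $M_X(2,F_A,1)\xrightarrow{\ \sim\ }F_B$. The natural candidate map comes from a Fourier–Mukai-type or elementary-modification construction: given $E$, one restricts to or twists along the elliptic fibration $X\to\PP^1$ and extracts a length-one subscheme or a line bundle supported on $F_B$; concretely I would look for a canonical exact sequence $0\to\O_X(F_A-F_B)\to E\to \ccI_Z(F_B)\to 0$ or an evaluation/coevaluation sequence identifying $E$ with an extension of a torsion sheaf on $F_B$ by a fixed line bundle, so that the ``extension datum'' or the support point lies on $F_B$. One then constructs the inverse: from a point $p\in F_B$ (equivalently a degree-$1$ line bundle or ideal sheaf on the genus-$1$ curve $F_B$), build the corresponding extension sheaf $E_p$, verify it is $H$-semistable with the right invariants, and check the two constructions are mutually inverse. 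To upgrade this bijection to an isomorphism of schemes I would exhibit a family of such sheaves over $F_B$ — i.e.\ a sheaf on $X\times F_B$ flat over $F_B$ with the prescribed Mukai vector fibrewise — inducing a morphism $F_B\to M_X(2,F_A,1)$ by the universal property of the moduli space, and similarly a universal-family argument (or the above modification performed in families over $M_X(2,F_A,1)$) for the morphism in the other direction; uniqueness of the representing data forces these morphisms to be inverse.

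The main obstacle I anticipate is twofold. First, pinning down the ``explicit class of polarizations'' so that (a) no walls are crossed, (b) semistability $=$ stability, and (c) every semistable $E$ is locally free with the expected restriction behaviour on the multiple fibres — this requires a careful chamber analysis using $F_A^2=F_B^2=0$, $F_A\cdot F_B=1$, and $\omega_X=\O_X(F_B-F_A)$, and is exactly where H. Kim's earlier results were not sharp. Second, and more seriously, proving that the natural set-theoretic bijection is an isomorphism of schemes: one must show the constructed family over $F_B$ is \emph{complete} (the Kodaira–Spencer map is an isomorphism at every point), which amounts to identifying $\ext^1_X(E,E)$ with the tangent space $T_pF_B$ compatibly, and checking there are no obstructions (using $\ext^2_X(E,E)_0=0$, which follows from stability and $\langle v,v\rangle=0$ together with Serre duality twisted by $\omega_X$ on the Enriques surface). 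Carrying out this deformation-theoretic comparison, while keeping track of the $\omega_X$-twist peculiar to Enriques surfaces, is where the real work lies; everything else is bookkeeping with divisor classes on $X$.
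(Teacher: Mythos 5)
Your overall architecture (present $E$ as an extension, get a set-theoretic bijection with $F_B$, then build families in both directions to upgrade it to an isomorphism of schemes) matches the paper's, and your dimension count $\langle v,v\rangle=0$ is correct. But there are two genuine gaps. The first is the mechanism by which $F_B$ enters. The candidate sequence you write down, $0\to\O_X(F_A-F_B)\to E\to\ccI_Z(F_B)\to 0$, carries no point datum: computing $c_2$ gives $(F_A-F_B).F_B+\deg Z=1+\deg Z=1$, so $Z=\emptyset$ and nothing singles out a point of $F_B$. The correct presentation is $0\to\O_X\to E\to\ccI_x\otimes\O_X(F_A)\to 0$ (the section exists because Riemann--Roch plus stability force $h^0(E)\ge 1$ and a divisor analysis forces $D=0$); the half-pencil $F_B$ then appears through Serre duality, $\ext^1(\ccI_x\otimes\O_X(F_A),\O_X)\simeq H^1(\ccI_x\otimes\O_X(F_B))^*$ using $\omega_X\simeq\O_X(F_B-F_A)$, which is nonzero exactly when $x\in F_B$. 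Your plan does not identify this step, and it is the heart of the set-theoretic bijection.

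The second and more serious gap is your claim that $\ext^2_X(E,E)_0=0$ "follows from stability and $\langle v,v\rangle=0$ together with Serre duality." It does not: since $H^2(\O_X)=0$, one has $\ext^2(E,E)_0=\ext^2(E,E)\simeq\hom(E,E\otimes\omega_X)^*$, and because $E$ and $E(K_X)$ are both stable of the same slope this space is nonzero precisely when $E\simeq E(K_X)$. That isomorphism genuinely occurs: the paper shows $E\simeq E(K_X)$ exactly when the associated point $x_0$ is a singular point of $F_B$ (half-pencils can be nodal or reducible). So $M_X(2,F_A,1)$ is obstructed and singular over $\Sing(F_B)$, and your strategy of "Kodaira--Spencer isomorphism at every point plus vanishing obstructions" collapses exactly there --- this is also where Kim's original argument went astray. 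The paper circumvents this by a different route: the expected dimension equals the actual dimension at every point, so the moduli space is a locally complete intersection; being reduced at generic points it is reduced everywhere, and then two bijective morphisms between reduced schemes whose compositions are the identity on closed points must be mutually inverse isomorphisms. Without some replacement for this reducedness argument (or a direct comparison of the complete local rings at the obstructed points), your proof cannot conclude an isomorphism of schemes, only a bijection that is an isomorphism away from $\Sing(F_B)$.
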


This result corrects and strengthens the results of Chapter 5 of
Kim's thesis (see \cite[Theorem 5.1]{Kim}).

Let us recall that an Enriques surface is called \emph{unnodal} if
it does not contain any $(-2)$-curves. Kim considered only locally
free sheaves on unnodal Enriques surfaces and claimed that the
corresponding moduli space is non-reduced (which is false). Some
parts of his arguments are also invalid without further
assumptions on the polarization (e.g., in proof of \cite[Theorem
5.1]{Kim} he changes polarization and claims that the bundle
remains stable).

It should be noted that in his later papers H. Kim claimed
somewhat different results. In \cite[Example 1]{Kim2} he claimed a
similar theorem for locally free sheaves and an arbitrary
polarization (this statement is false). In his most recent paper
\cite[II, Example]{Kim3} he claimed the result closest to Theorem
\ref{main}: birationality of the moduli space $M_X(2,F_A,1)$ (for
an arbitrary polarization) with half-pencil $F_B$. In both cases
no proof was provided.

The method of proof od Theorem \ref{main} is quite similar to the
one used by Okonek and Van de Ven \cite{OV} in computation of
Donaldson invariants for Dolgachev surfaces (this result implied
existence of infinitely many homeomorphic surfaces which are not
diffeomorphic). The main new ingredients are a good choice of
polarizations and the method of description of singularities of
moduli spaces (see Subsections \ref{pol} and \ref{singularities}).

\medskip

One of the interests of this theorem stems from the interesting
theorem proven by K. Yoshioka in \cite[Theorem 4.6]{Yo}. Namely,
Yoshioka proved that for a general polarization the moduli space
of semistable sheaves of odd rank and with a primitive Mukai
vector on an unnodal Enriques surface is irreducible. On nodal
Enriques surfaces Theorem \ref{main} provides for a general
polarization an example of a reducible moduli space of semistable
sheaves of even rank and with a primitive Mukai vector.

In even rank for unnodal Enriques surfaces we have the following
theorem:

\begin{Theorem} \label{Main}
Let $X$ be an unnodal Enriques surface and let $r$ be a positive
even integer. Then for a general polarization the number of
irreducible components of the moduli space of rank $r$ Gieseker
semistable sheaves with fixed primitive Mukai vector and with
fixed determinant is the same as the number of irreducible
components of a similar moduli space for rank $2$ or $4$.
\end{Theorem}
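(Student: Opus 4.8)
The plan is to follow Yoshioka's strategy from \cite{Yo} (Theorem 4.6 and its surrounding machinery) for reducing odd rank to rank $1$, and adapt it to the even-rank case where the target ``bottom'' ranks are $2$ and $4$ rather than $1$. The key tool is the existence, on an unnodal Enriques surface $X$, of a rich supply of spherical and isotropic Mukai vectors together with Fourier--Mukai type equivalences (or at least birational correspondences on moduli spaces) induced by suitable ``reflection'' functors. Concretely, I would first recall the Mukai lattice and the Mukai pairing on $X$, noting that since $q(X)=0$ and $\omega_X^{\otimes 2}\simeq\O_X$, the relevant lattice is well understood and the group generated by reflections in $(-2)$-classes acts transitively (on unnodal $X$ there are no $(-2)$-curves, so the obstructions coming from spherical sheaves are controlled). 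The goal is to show that for a primitive Mukai vector $v$ of even rank $r$ and a general polarization $H$, the moduli space $M_H(v)$ is related by a birational map — or better, a composition of wall-crossing isomorphisms and Fourier--Mukai equivalences — to a moduli space $M_{H'}(v')$ with $\rk v'\in\{2,4\}$ and $v'$ still primitive, in a way that preserves the number of irreducible components.

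The main steps, in order. First, I would set up the action on Mukai vectors: given primitive $v=(r,c_1,\chi)$ with $r$ even, apply an autoequivalence $\Phi$ of $D^b(X)$ (built from twists by line bundles, the spherical twist associated to $\O_X$, and if available a Fourier--Mukai kernel coming from a fine moduli space of sheaves) so that $\Phi_*v=v'$ has strictly smaller rank, as long as $\rk v>4$. The parity is preserved because the Mukai pairing and the structure of the autoequivalence group force the rank to change by controlled amounts; one checks that the ``descent'' cannot skip from rank $>4$ directly below $2$ without landing on $2$ or $4$ — this is the arithmetic heart and must be done by an explicit analysis of how $\rk$ transforms, exactly parallel to how Yoshioka lands on rank $1$ in the odd case. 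Second, I would invoke the fact (Yoshioka, \cite{Yo}) that for a general polarization and a primitive Mukai vector, $M_H(v)$ is non-empty of expected dimension and the relevant autoequivalences induce isomorphisms between moduli spaces for general polarizations on both sides; this uses that walls are locally finite and ``general'' is preserved under the finitely many vectors involved in the reduction. Third, since an isomorphism (or a birational map between smooth, or at least normal, projective varieties that is an isomorphism in codimension one, which is what wall-crossing for general polarizations gives on the stable locus) preserves the number of irreducible components, the count for $v$ equals the count for $v'$; iterating, it equals the count for some rank $2$ or rank $4$ vector.

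The hard part will be Step 1 and Step 2 together: verifying that the autoequivalence-induced transformations of Mukai vectors genuinely terminate at rank exactly $2$ or $4$ (not lower, not stuck) for \emph{every} primitive even $v$, and that at each stage ``general polarization'' can be chosen compatibly on both sides so that the comparison of moduli spaces is an honest isomorphism of the stable loci (with strictly semistable loci of higher codimension, hence irrelevant to component counts). On an unnodal Enriques surface the absence of $(-2)$-curves removes the pathologies that would otherwise break the Fourier--Mukai comparisons, so I expect this to go through; the delicate bookkeeping is the lattice arithmetic controlling the rank under the reflection group, which is where the ``$2$ or $4$'' dichotomy (as opposed to a single value) comes from — reflecting the fact that the Mukai lattice of an Enriques surface does not have a single orbit of primitive even-rank vectors under the autoequivalence group, but the orbits have representatives in ranks $2$ and $4$.
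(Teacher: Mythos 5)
Your proposal correctly identifies the general shape of the argument --- reduce the rank of the Mukai vector by a sequence of operations that preserve the invariant being counted, in the spirit of Yoshioka's odd-rank reduction to rank $1$ --- but it defers exactly the step that constitutes the theorem. The entire content of the ``$2$ or $4$'' dichotomy is the lattice arithmetic that you label ``the hard part'' and do not carry out; without it there is no proof. The paper's mechanism is concrete and uses only two moves: twisting $v\mapsto v\exp(D)$ by a line bundle, and Yoshioka's swap $e(M_H(r+c_1-(s/2)\varrho_X))=e(M_H(s-c_1-(r/2)\varrho_X))$, valid for a general polarization when $(c_1^2)<0$ (Proposition \ref{switch_rs}). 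Writing $H^2(X,\ZZ)_f=\HH\perp -E_8$ and (after a preliminary twist and a swap that makes $r>\langle v^2\rangle$) putting $c_1=\frac{r}{2}bf+\xi$ with $b\in\{0,\pm 1\}$ and $\xi\in -E_8$, one chooses an isotropic class $D=\sigma-\frac{(\eta^2)}{2}f+\eta$ so that the $\varrho_X$-component of $v\exp(D)$ equals $-\varepsilon/2$ with $\varepsilon\in\{2,4\}$; a final swap then produces rank $\varepsilon$. The reason one cannot always reach $\varepsilon=2$ --- i.e.\ the source of the dichotomy --- is that $\gcd(r,\xi,s)$ may equal $2$ for a primitive even-rank Mukai vector (Corollary \ref{Corollary_na_temat_gcd}), in which case $(\eta,\xi)$ is forced to be even, so $2(\eta,\xi)$ moves in steps of $4$ and can realize only one of $s-rb-2$, $s-rb-4$; that one is then achievable by unimodularity of $-E_8$ (Lemma \ref{known}). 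None of this is visible in your sketch, and your heuristic explanation (``the orbits have representatives in ranks $2$ and $4$'') is asserted, not proved.

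Two further points. First, your proposed machinery is heavier than, and in places not available in, the paper's setting: $\O_X$ on an Enriques surface is not a spherical object (since $h^2(\O_X)=0$), so ``the spherical twist associated to $\O_X$'' is not the right tool, and the paper never needs derived autoequivalences or codimension-one comparisons of wall-crossing birational maps --- it works throughout with virtual Hodge polynomials, which are independent of the choice of general polarization and compare across the $r\leftrightarrow s$ swap by \cite[Proposition 4.5]{Yo}; the component count is then extracted from the stronger statement $e(M_H(v,L))=e(M_H(v',L'))$ of Theorem \ref{Main2}. Second, the general case is an induction on $r$ after normalizing $c_1=d_1\sigma+d_2f+\xi$ with $-r/2<d_i\le r/2$; the boundary case $(d_1,d_2)=(r/2,r/2)$ requires replacing the hyperbolic summand $\HH$ by a different one inside $H^2(X,\ZZ)_f$, another explicit step your outline does not anticipate. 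As written, the proposal is a research plan rather than a proof.
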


This theorem together with Yoshioka's results and Kim's conjecture
in the rank $2$ case suggest that on unnodal Enriques surfaces the
moduli space of Gieseker semistable sheaves with fixed primitive
Mukai vector and determinant should always be irreducible for
general polarization.

In fact we prove a much stronger form of Theorem \ref{Main}
allowing to compare virtual Hodge polynomials of some moduli
spaces (see Theorem \ref{Main2}). Our proof follows the method of
Yoshioka \cite[Section 4]{Yo} but the actual computations become
more complicated than for odd rank. This method of proof also
allows to reprove the main result of \cite{Kim1} (see Theorem
\ref{Kim-thm}).

\medskip

The structure of the paper is as follows. In Section 1 we prove
Theorem \ref{main}. Then in Section 2 we prove a refinement of
Theorem \ref{Main}. At the beginning of each section we describe
the main steps in proofs.

\medskip

In the paper we use without warning the following facts about
Enriques surfaces. If $X$ is an Enriques surface then
$\chi(\O_X)=1$ and the Riemann-Roch theorem for rank $2$ vector
bundle $E$ says that $\chi(E)=2+\frac{1}{2}c_1^2(E)-c_2(E)$. The
canonical divisor of $X$ can be computed as $K_X=F_A-F_B=F_B-F_A$
(see \cite[VII.17]{BHPV}).

\section{One-dimensional moduli spaces of semistable sheaves}

In this section we prove Theorem \ref{main}. The structure of
proof is as follows. First we show how to choose polarizations for
which Theorem \ref{main} holds. Then we show that every $2$
Gieseker $H$-semistable sheaves with first Chern class $F_A$ and
second Chern class $1$ can be obtained as a certain extension.
This is used to prove that we have a set-theoretical bijection
between the corresponding moduli scheme and a half-pencil $F_B$.
The main difficulty is to prove that this is an isomorphism of
schemes. To do so we study singularities of the moduli scheme.
Then we construct some families of sheaves and use them to
construct morphisms from the moduli scheme to the half-pencil and
back providing proof of Theorem \ref{main} (see Theorem
\ref{main2}).

\subsection{Choice of a polarization} \label{pol}

In order to talk about stability of sheaves on a surface $X$ we
have to choose a polarization of $X$. Choosing it smartly we can
exclude existence of strictly semistable sheaves:

\begin{Lemma} \label{polarization}
There exists a polarization $H$ of $X$ such that all rank $2$
Gieseker $H$-semistable sheaves $E$ with $c_1E=F_A$ and $c_2E=1$
are slope $H$-stable and there exists an ample divisor $L_0$ such
that $H=L_0+nF_A$ for some integer $n>F_A.L_0$.
\end{Lemma}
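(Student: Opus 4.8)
The plan is to find a polarization $H$ that avoids walls in the ample cone where strictly semistable sheaves could appear, while simultaneously having the prescribed form $H = L_0 + nF_A$ with $n$ large. First I would recall the numerical situation: a rank $2$ sheaf $E$ with $c_1(E) = F_A$ and $c_2(E) = 1$ is strictly $H$-semistable only if it has a rank $1$ subsheaf $\O_X(D) \otimes I_Z$ (or its twist by $\omega_X$) of the same reduced Hilbert polynomial, which forces $2D \equiv F_A$ numerically, i.e. $D$ or $D - F_A$ lies on the wall defined by $F_A.H = 2D.H$. Since $F_A$ is a half-pencil with $F_A^2 = 0$ and $K_X = F_B - F_A$, the relevant destabilizing classes $D$ satisfy $(2D - F_A).H = 0$; I would enumerate the finitely many such $D$ that can actually occur for sheaves with these Chern classes (bounded by the Bogomolov-type inequality and the fixed $c_2$), obtaining a finite set of hyperplanes (walls) in $\NS(X)_\RR$.

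Next I would pick an ample divisor $L_0$ that does not lie on any of these finitely many walls — this is possible because the walls are a finite union of proper subspaces (rational hyperplanes), while the ample cone is open; if necessary one perturbs a given ample class slightly. Then the key point is that moving in the direction of $F_A$, i.e. replacing $L_0$ by $L_0 + nF_A$, does not reintroduce any wall-crossing: since $F_A.(2D - F_A) = 2F_A.D$ and on each relevant wall $F_A.D$ has a fixed sign (or is controlled), for $n$ sufficiently large the sign of $(2D - F_A).(L_0 + nF_A)$ is governed by whichever of $(2D-F_A).L_0$ or $n(2D-F_A).F_A$ dominates, and in either case it stays nonzero. So for $n$ large enough $H = L_0 + nF_A$ is ample and off all walls, hence every Gieseker $H$-semistable $E$ with the given invariants is in fact $H$-stable; moreover slope-semistability and Gieseker-semistability coincide away from walls in this setting, and a slope-semistable non-slope-stable sheaf would again have to lie on a wall, so $E$ is slope $H$-stable. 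Finally I would impose $n > F_A.L_0$ as an extra explicit lower bound on $n$, which is harmless since we already need $n$ large.

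The main obstacle I expect is the bookkeeping needed to show the wall set is genuinely finite and that large $F_A$-twists stay on the correct side of every wall — concretely, bounding the possible first Chern classes $D$ of destabilizing subsheaves using the fixed $c_2 = 1$ together with the structure of $\NS(X)$ for an Enriques surface (which is unimodular of rank $10$, so one has good control), and then checking the asymptotic sign statement wall by wall. One has to be a little careful that $F_A$ itself is not ample (it has self-intersection $0$), so $L_0 + nF_A$ is ample only because $L_0$ is ample and $F_A$ is nef; and one must make sure the chosen $L_0$ is already off the walls, since adding multiples of the nef-but-not-ample class $F_A$ cannot fix a wall that $L_0$ sits on exactly. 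Once these points are handled the lemma follows.
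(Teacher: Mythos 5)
Your overall strategy (avoid the walls of type $(2,\Delta)$ in the sense of \cite[App.~4.C]{HL1}, so that slope semistable $=$ slope stable for the indivisible class $c_1=F_A$) is the same as the paper's, but the way you produce the polarization is genuinely different, and as written it has a gap. You first choose $L_0$ off \emph{all} walls and then let $n\to\infty$; the paper instead fixes the twist $n_0$ \emph{first} and then perturbs the ample part $L_1$ to $L_1+v$ so that the single class $(L_1+v)+n_0F_A$ misses the walls. The paper's order of quantifiers matters: the set of walls is only \emph{locally} finite in the hyperbolic manifold $\H$, not finite (on an Enriques surface $\Num(X)$ has rank $10$ and contains infinitely many classes $\xi$ with $-4\le\xi^2<0$, each giving a nonempty wall), so your claim that one gets ``a finite set of hyperplanes'' bounded by $c_2$ and Bogomolov is false. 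Worse, the rays $[L_0+nF_A]$ converge to the boundary point $[F_A]$ of the positive cone, which is exactly where local finiteness gives no control and walls can accumulate; so your per-wall observation that ``for $n$ sufficiently large the sign stays nonzero'' does not by itself yield a single $n$ that works for all walls. This uniform statement is actually true, but it needs an argument you have not supplied: e.g.\ by the Hodge index theorem the Gram determinant of $\{H_n,F_A,\xi\}$ with $H_n=L_0+nF_A$ is nonnegative, which for $\xi.H_n=0$, $\xi^2\ge -4$ and $\xi.F_A\ne 0$ forces $H_n^2\le 4(L_0.F_A)^2$ and hence $n\lesssim 2\,L_0.F_A$; while walls with $\xi.F_A=0$ contain $[L_0+nF_A]$ for some $n$ iff they contain $[L_0]$, which your choice of $L_0$ excludes. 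Without some such computation the key step fails, and the issue is not ``bookkeeping'': the paper explicitly warns that for a badly chosen ample class $L_1$ the ray $L_1+nF_A$ can lie on a wall for \emph{every} $n$.

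A second, smaller omission: your reduction of strict semistability to the wall condition $(2D-F_A).H=0$ silently assumes $2D-F_A\ne 0$ in $\Num(X)$. If $F_A$ were $2$-divisible, the destabilizer with $2D=F_A$ would have the same slope for \emph{every} polarization and no choice of $H$ could remove it. The paper disposes of this by exhibiting a $2$-section $G$ with $G.F_A=1$, so that $F_A$ is indivisible in $\Num(X)$; you need this (or an equivalent) before the wall-avoidance argument proves anything. Once these two points are repaired your route does give the lemma, and it even yields a slightly more explicit conclusion (every sufficiently large $n$ works for a generic $L_0$), whereas the paper's perturbation argument is shorter because it only ever invokes local finiteness at an interior point of the positive cone.
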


To prove this lemma we need  to recall some results from
\cite[Appendix C to Chapter 4]{HL1}.

Let $X$ be a smooth projective surface. Let $K^+$ denote the
\emph{positive cone} of $X$, i.e., the set
$$\{x\in\Num_{\RR}(X): \, x^2>0\textrm{ and }x.H>0\textrm{ for some
ample divisior }H\}.$$ Let $\H$ denote the set of rays in $K^+$.
This set can be identified with the hyperbolic manifold $\{H\in
K^+||H|=1\}$, where $|H|$ denotes $|H^2|^{\frac{1}{2}}$. We can
define a hyperbolic metric on $\H$ by setting
$$\beta([H],[H'])=\arccosh\left(\frac{H.H'}{|H|.|H'|}\right)$$
for points $[H], [H']\in \H$.

\medskip

\begin{Definition}
Let $r\geq 2$ and $\Delta>0$ be integers. A class $\xi\in\Num (X)$
is \emph{of type} $(r,\Delta)$ if $-\frac{r^2}{4}\Delta
\leq\xi^2<0$. A \emph{wall defined by $\xi$} is the real
1-codimensional submanifold
$$W_{\xi}=\{[H]\in \H|\xi.H=0\}\subset \H.$$
\end{Definition}

\begin{Lemma}
Let us fix positive integers $r\ge 2$ and $\Delta$. Then the set
of walls of type $(r,\Delta)$ is locally finite in $\H$.
\end{Lemma}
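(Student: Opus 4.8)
The plan is to reduce the statement to a boundedness assertion in the lattice $\Num(X)$ and then invoke the Hodge index theorem. Since $\H$ is a manifold, hence locally compact, it suffices to show that every point $[H_0]\in\H$ has an open neighbourhood $K$ meeting only finitely many walls of type $(r,\Delta)$. Each such wall equals $W_\xi$ for at least one $\xi\in\Num(X)$ with $-\tfrac{r^2}{4}\Delta\le \xi^2<0$ and $W_\xi\cap K\neq\emptyset$, so it is enough to prove that the set of classes $\xi$ of type $(r,\Delta)$ with $W_\xi\cap K\neq\emptyset$ is finite; being a subset of the lattice $\Num(X)$, it is finite as soon as it is bounded.

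To produce such a bound I would fix the representative $H_0$ of the given ray with $H_0^2=1$ and use the Hodge index theorem: the intersection form on $\Num_{\RR}(X)$ has signature $(1,\rk\Num(X)-1)$, so it is negative definite on the orthogonal complement $H_0^{\perp}$. Writing each $\xi\in\Num_{\RR}(X)$ uniquely as $\xi=\lambda H_0+\eta$ with $\eta\in H_0^{\perp}$, the expression $\|\xi\|^2:=\lambda^2-\eta^2$ is a genuine Euclidean norm, so it suffices to bound $|\lambda|$ and $-\eta^2$ uniformly. From $\xi^2=\lambda^2+\eta^2\ge-\tfrac{r^2}{4}\Delta$ one already gets $-\eta^2\le \lambda^2+\tfrac{r^2}{4}\Delta$, so the whole problem comes down to bounding $|\lambda|$.

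The heart of the argument is the following estimate. Take $[H]\in W_\xi\cap K$ with its representative normalized by $H^2=1$, and write $H=\alpha H_0+\zeta$ with $\zeta\in H_0^{\perp}$; then $\alpha^2=1-\zeta^2\ge 1$, and the defining equation $\xi.H=0$ becomes $\lambda\alpha=-\eta.\zeta$. Applying the Cauchy--Schwarz inequality to the negative definite form on $H_0^{\perp}$ gives $|\lambda|\le|\lambda\alpha|=|\eta.\zeta|\le\sqrt{-\eta^2}\,\sqrt{-\zeta^2}$. The decomposition $[H]\mapsto\zeta$ depends continuously on $[H]$ and $\zeta\to 0$ as $[H]\to[H_0]$, so I can choose the neighbourhood $K$ small enough that $-\zeta^2\le\tfrac12$ on $K$. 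Combining this with $-\eta^2\le\lambda^2+\tfrac{r^2}{4}\Delta$ yields $\lambda^2\le\tfrac12\bigl(\lambda^2+\tfrac{r^2}{4}\Delta\bigr)$, hence $\lambda^2\le\tfrac{r^2}{4}\Delta$, and then $\|\xi\|^2\le\tfrac{3r^2}{4}\Delta$. Thus the relevant classes $\xi$ lie in a fixed ball of $\Num_{\RR}(X)$, so there are finitely many of them, and only finitely many walls of type $(r,\Delta)$ meet $K$.

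The one genuine obstacle is precisely this last estimate: one must use that $K$ can be shrunk — so that the $H_0^{\perp}$-component of $H$ is small — in order to absorb the $\lambda^2$ term coming from the type bound. A crude bound over a large compact set would fail to close the inequality. Everything else (local compactness of $\H$, continuity of the orthogonal decomposition in a neighbourhood of $[H_0]$, and the fact that a bounded subset of the lattice is finite) is routine.
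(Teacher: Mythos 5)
Your argument is correct: decomposing $\xi=\lambda H_0+\eta$ and $H=\alpha H_0+\zeta$ orthogonally with respect to $H_0$, using the Hodge index theorem to make the form negative definite on $H_0^{\perp}$, and shrinking the neighbourhood so that $-\zeta^2\le\tfrac12$ absorbs the $\lambda^2$ term exactly closes the estimate $\|\xi\|^2\le\tfrac{3r^2}{4}\Delta$, and a bounded subset of the lattice $\Num(X)$ is finite. The paper gives no proof of this lemma --- it is quoted from \cite[Ch.~4, App.~C]{HL1} --- and your argument is essentially the standard one from that reference, so there is nothing to compare beyond noting that it is complete and valid.
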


\begin{Theorem}
Let $H$ be an ample divisor on $X$ and let $E$ be a slope
$H$-semistable torsion free sheaf of rank $r$ and discriminant
$\Delta$. Let $E'\subset E$ be a subsheaf of rank $r'$, $0<r'<r$,
with $\mu_H(E')=\mu_H(E)$. Then the class
$\xi:=r.c_1(E')-r'.c_1(E)$ satisfies the following conditions:
$$\xi.H=0\quad\textit{and}\quad -\frac{r^2}{4}\Delta \leq
\xi^2\leq 0,$$ and $\xi^2=0$ if and only if $\xi=0$. In
particular, if $c_1\in\Num(X)$ is indivisible, and if $H$ is not
on a wall of type $(r,\Delta)$, then a torsion free sheaf of rank
$r$, with first Chern class $c_1$ and discriminant $\Delta$ is
slope $H$-semistable if and only if it is slope $H$-stable.
\end{Theorem}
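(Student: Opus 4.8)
The plan is to verify directly that the class $\xi := r\,c_1(E') - r'\,c_1(E)$ satisfies the stated inequalities, and then to deduce the ``in particular'' statement about walls. First I would establish the equality $\xi.H = 0$. Since $\mu_H(E') = \mu_H(E)$, we have $\frac{c_1(E').H}{r'} = \frac{c_1(E).H}{r}$, hence $r\,c_1(E').H = r'\,c_1(E).H$, which is precisely $\xi.H = 0$. This is immediate from the equal-slope hypothesis and requires no stability.

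The substantive part is the inequality $-\frac{r^2}{4}\Delta \le \xi^2 \le 0$. For the upper bound $\xi^2 \le 0$, the idea is that $\xi.H = 0$ places $\xi$ in the orthogonal complement of the ample class $H$ inside $\Num_\RR(X)$; by the Hodge index theorem this orthogonal complement is negative semidefinite (since $H^2 > 0$), so any class orthogonal to $H$ has non-positive self-intersection. This also shows $\xi^2 = 0$ forces $\xi = 0$: on a surface the Hodge index form is nondegenerate of signature $(1,\rho-1)$, so the negative semidefinite form on $H^\perp$ is in fact negative definite, whence $\xi^2 = 0 \Rightarrow \xi = 0$. For the lower bound I would use the Bogomolov-type inequality coming from slope semistability of $E$. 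Writing $\Delta(E) = 2rc_2 - (r-1)c_1^2$ for the discriminant and expanding $\xi^2 = r^2 c_1(E')^2 - 2rr'\,c_1(E').c_1(E) + r'^2 c_1(E)^2$, I expect the bound to follow by comparing the discriminants of $E'$, of the quotient $E'' = E/E'$, and of $E$, using the additivity relation among discriminants of the terms of a short exact sequence together with the semistability of $E$ (which forces $E'$ and $E''$ to have non-negative discriminants in the relevant sense). The key algebraic identity should express $\xi^2$ in terms of $\Delta(E)$, $\Delta(E')$, $\Delta(E'')$ and nonnegative combinatorial factors, from which $-\frac{r^2}{4}\Delta \le \xi^2$ drops out.

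For the ``in particular'' clause, suppose $c_1 = c_1(E)$ is indivisible and $H$ lies on no wall of type $(r,\Delta)$. Semistability always follows from stability, so only the converse direction needs argument: I would show a semistable but not stable sheaf produces a wall. If $E$ is strictly semistable, there is a proper subsheaf $E'$ with $\mu_H(E') = \mu_H(E)$, and the first part yields a class $\xi$ with $\xi.H = 0$ and $-\frac{r^2}{4}\Delta \le \xi^2 \le 0$. Were $\xi \ne 0$, then $\xi^2 < 0$ (strictly, by the definiteness above), so $\xi$ would be of type $(r,\Delta)$ and $H$ would lie on the wall $W_\xi$, contradicting the hypothesis. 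Hence $\xi = 0$, i.e.\ $r\,c_1(E') = r'\,c_1(E)$; but then $c_1 = c_1(E)$ is divisible by $\frac{r}{\gcd(r,r')} > 1$ unless $r \mid r'$, which is impossible for $0 < r' < r$, contradicting indivisibility of $c_1$. Thus no such destabilizing $E'$ exists and $E$ is stable.

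The main obstacle I anticipate is the lower bound $-\frac{r^2}{4}\Delta \le \xi^2$: making the discriminant bookkeeping precise requires the right normalization of $\Delta$ and careful use of the fact that slope semistability of $E$ controls (rather than the individual discriminants) the combination appearing in $\xi^2$. The Hodge index input for the upper bound and the divisibility argument in the last clause are comparatively routine once the definitions are unwound.
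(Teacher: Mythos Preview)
The paper does not supply its own proof of this statement; it is quoted verbatim from \cite[Appendix 4.C]{HL1} as background for the choice of polarization in Lemma~\ref{polarization}. Your outline is the standard argument found there and is essentially correct: the orthogonality $\xi.H=0$ is immediate, the Hodge index theorem gives $\xi^2\le 0$ with equality only for $\xi=0$ in $\Num(X)$, and the divisibility argument for the final clause is exactly right.

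For the lower bound, which you correctly flag as the one nontrivial step, the missing ingredients are as follows. First replace $E'$ by its saturation in $E$; this does not change $\xi$ in $\Num(X)$, since the difference $c_1(E'_{\rm sat})-c_1(E')$ is effective and orthogonal to the ample class $H$, hence numerically trivial. Then $E'$ and $E''=E/E'$ are both torsion free and slope $H$-semistable of the same slope as $E$, so Bogomolov's inequality gives $\Delta(E')\ge 0$ and $\Delta(E'')\ge 0$. The algebraic identity you are looking for is
\[
\frac{\Delta(E)}{r}=\frac{\Delta(E')}{r'}+\frac{\Delta(E'')}{r''}-\frac{\xi^2}{r\,r'\,r''},
\]
where $r''=r-r'$. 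Combined with the nonnegativity of $\Delta(E')$ and $\Delta(E'')$ this yields $-\xi^2\le r'r''\,\Delta(E)\le \tfrac{r^2}{4}\Delta(E)$, the last inequality because $r'r''\le (r'+r'')^2/4$.
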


Now we can prove Lemma \ref{polarization}:

\medskip

\begin{proof}

An Enriques surface $X$ viewed as an elliptic fibration $X\to \PP
^1$ always has a $2$-section $G$ such that $G.F=2$ for general
fibre (see \cite[Proposition VIII.17.5]{BHPV}). Therefore
$G.F_A=1$ and $c_1=F_A$ is indivisible in $\Num(X)$. To make
$M_X(2,F_A,1)$ parameterize only stable sheaves we should choose a
polarization which is not on a wall of type $(2,-4)$. We also need
to work with a polarization which is close to the ray determined
by $F_A$ in $\Num_{\RR}(X)$ (and which is not ample). More
precisely, the desired polarization should be of the form
$L_0+nF_A$, where $L_0$ is an ample divisor and $n>F_A.L_0$. If we
choose $L_0=L_1$ arbitrary, it may happen that $L_1+nF_A$ is on
the wall of type $(2,-4)$ for every $n$. In this case we fix
$n_0>F_A L_1+1$ and we choose a vector $v\in \Num_{\QQ}(X)$ such
that
\begin{enumerate}
\item $L_1+v$ is an ample divisor,
\item $(L_1+v)+n_0F_A$ is not on a wall of type $(2,-4)$,
\item $v.F_A<1$
\end{enumerate}
Such a vector $v$ exists because the cone of ample divisors is
open and there exists a neighborhood of $L_1+nF_A$ which
intersects only finitely many walls of type $(2,-4)$. Take $m\in
\NN$ such that $L_1':=m(L_1+v)$ belongs to $\Num (X)$. Then $L_1'$
is ample and $L_1'+mn_0F_A=m(L_1+v+n_0F_A)$ does not lie on any
wall of type $(2,-4)$. Moreover, we have
$$L_1'.F_A=m(L_1+v).F_A=m(L_1.F_A+v.F_A)<m(n_0-1+1)=mn_0.$$
Therefore $L_0=L_1'$ and $n=mn_0$ give a polarization $H=L_0+nF_A$
which is not on any wall of type $(2,-4)$ and such that
$n>L_0.F_A$. By Nakai's criterion the divisor $H$ is ample because
$H^2=L^2_0+2nL_0.F_A+F_A^2=L^2_0+2nL_0.F_A>0$ and
$H.D=L_0.D+nF_A.D>0$ since $F_A.D\geq 0$ for every effective
divisor $D$. This finishes the proof.
\end{proof}

\subsection{Presentation of a sheaf as an extension}

From now on we work only with polarizations described in Lemma
\ref{polarization}.

\begin{Lemma} \label{sheaf-to-point}
Let $E$ be a rank $2$ Gieseker $H$-semistable sheaf with the first
Chern class $F_A$ and the second Chern class $1$. Then there
exists a point $x\in X$ such that $E$ sits in a non-split exact
sequence of the form
$$0\rightarrow\O_X\rightarrow E \rightarrow \ccI_x\otimes\O_X(F_A)\rightarrow 0.$$
\end{Lemma}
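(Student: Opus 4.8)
The plan is to produce a nonzero map $\O_X \to E$ and show its cokernel is $\ccI_x \otimes \O_X(F_A)$ for some point $x$. First I would compute the relevant Euler characteristic: by Riemann--Roch for a rank $2$ bundle, $\chi(E) = 2 + \tfrac12 c_1^2(E) - c_2(E) = 2 + \tfrac12 F_A^2 - 1$, and since $F_A$ is a half-pencil it is isotropic, $F_A^2 = 0$, so $\chi(E) = 1$. Hence $h^0(E) + h^0(E^\vee \otimes \omega_X) \geq h^0(E) - h^1(E) + h^2(E) \ge \chi(E) = 1$ (using $h^2(E) = h^0(E^\vee \otimes \omega_X)$ by Serre duality). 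The first step is therefore to rule out $h^0(E^\vee \otimes \omega_X) \neq 0$: a nonzero section there gives a nonzero map $\O_X \to E^\vee \otimes \omega_X$, equivalently a nonzero map $E \to \omega_X$; comparing slopes with respect to $H = L_0 + nF_A$ and using that $E$ is slope $H$-stable by Lemma \ref{polarization} (so any quotient line bundle of $E$ has slope $> \mu_H(E) = \tfrac12 F_A.H$), while $\mu_H(\omega_X) = (F_A - F_B).H$, one checks the destabilizing inequality cannot hold for our choice of polarization with $n > F_A.L_0$, because $F_A.H = F_A.L_0$ is small relative to $F_B.H$ which contains the large term $nF_B.F_A = n$. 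So $h^2(E) = 0$ and $h^0(E) \geq 1$.

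Next, take a nonzero section $s \colon \O_X \to E$. I claim it has no zeros in codimension $1$, i.e. it does not factor through $\O_X(D) \hookrightarrow E$ for an effective nonzero divisor $D$: such a sub-line-bundle would have $\mu_H(\O_X(D)) = D.H \geq 0$, and if $D.H = 0$ then $D = 0$ since $H$ is ample, while if $D.H > 0$ this already contradicts stability once we note $\mu_H(E) = \tfrac12 F_A.H$ may be small — more carefully, stability of $E$ forces any sub-line-bundle $M \subset E$ to satisfy $M.H < \tfrac12 F_A.H$, and combined with the polarization being off all walls of type $(2,-4)$ (so $\xi = 2c_1(M) - F_A$ is not isotropic and strict inequality is available) one excludes $D \neq 0$. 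Thus $s$ vanishes on a zero-dimensional subscheme $Z$, and we get
$$0 \to \O_X \xrightarrow{\ s\ } E \to \ccI_Z \otimes \O_X(F_A) \to 0.$$
Comparing second Chern classes, $c_2(E) = \deg Z = 1$, so $Z = \{x\}$ is a single reduced point. Finally, the sequence is non-split: a splitting would give $E \cong \O_X \oplus \ccI_x \otimes \O_X(F_A)$, which is not even torsion-free-with-the-stated-stability — more directly, $\O_X \oplus \ccI_x(F_A)$ contains $\O_X(F_A)$ as a sub-line-bundle with $F_A.H = \tfrac12 \cdot 2 F_A.H$... hence slope equal to $\mu_H(E)$, contradicting slope-stability of $E$.

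The main obstacle is the codimension-one-freeness of the section $s$ and, relatedly, pinning down exactly which inequalities on the polarization are needed: one must use not just ampleness of $H$ but the specific shape $H = L_0 + nF_A$ with $n > F_A.L_0$ and $H$ off all walls of type $(2,-4)$, so that every potential destabilizing or splitting sub-line-bundle is excluded by a strict slope inequality. I expect the argument to proceed by enumerating the finitely many numerical classes $\xi$ of type $(2,-4)$ that could arise and checking each is killed by the wall-avoidance from Lemma \ref{polarization}; the vanishing $h^2(E) = 0$ should be the cleanest part, and the non-splitting the most routine.
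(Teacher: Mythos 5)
Your overall skeleton matches the paper's: compute $\chi(E)=1$, kill $h^2(E)$ via Serre duality and stability, extract a section, identify its vanishing locus, and rule out splitting. But the step you yourself flag as ``the main obstacle'' --- showing the section does not vanish along a divisor $D\neq 0$ --- is exactly where your proposed mechanism fails. Stability only gives $D.H<\frac{1}{2}F_A.H$, which is a priori perfectly compatible with an effective $D\neq 0$ of small positive $H$-degree; and wall-avoidance for classes of type $(2,-4)$ is irrelevant to this question, since walls only control the locus where a subsheaf has slope \emph{equal} to $\mu_H(E)$ (i.e.\ the existence of strictly semistable sheaves), not sub-line-bundles of strictly smaller slope. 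Enumerating the classes $\xi$ with $\xi.H=0$ cannot exclude an effective $D$ with $0<D.H<\frac{1}{2}F_A.H$.

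The paper's actual argument at this point is genuinely geometric and specific to half-pencils. Writing $H=L_0+nF_A$ with $n>F_A.L_0$, the stability inequality $D.L_0+nF_A.D<\frac{1}{2}F_A.L_0$ first forces only $F_A.D=0$ (not $D=0$). Then the $c_2$ computation gives $D^2=\deg Z_1+\deg Z_2-1\geq -1$, and evenness of the intersection form on an Enriques surface upgrades this to $D^2\geq 0$. Hence $(F_A-2D)^2=4D^2\geq 0$ while $(F_A-2D).H>0$, so $F_A-2D\neq 0,K_X$ and the effectivity criterion \cite[Chapter VIII, Proposition 16.1.ii]{BHPV} gives $|F_A-2D|\neq\emptyset$; combined with $h^0(\O_X(F_A))=1$ this yields an equality of divisors $F_A=2D+C$ with $C$ effective, and since the half-pencil $F_A$ has no multiple components, $D=0$. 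None of these ingredients appear in your plan, and without them the lemma is not proved. A smaller point: your justification of $h^2(E)=0$ invokes a ``large term $nF_B.F_A=n$,'' but $F_A$ and $F_B$ are half-fibres of the same elliptic pencil, so $F_A.F_B=0$ and $\mu_H(\omega_X)=K_X.H=0$; the vanishing still holds, simply because $\mu_H(\omega_X)=0<\frac{1}{2}F_A.H=\mu_H(E)$ and $E$ is slope stable, which is in substance the paper's (correct) argument.
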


\begin{proof}
By Lemma \ref{polarization} we know that $E$ is slope $H$-stable.
Since $H$ is fixed we will often omit $H$ when refering to
stability of sheaves. By the Riemann-Roch theorem we have
$$h^0(E)+h^2(E)\ge 2+\frac{1}{2}F_A^2-1=1.$$
By the Serre duality we have $h^2(E)=\dim \hom (E,\omega_X)=
h^0(\omega_X\otimes E^{\vee})$. Moreover, $\omega_X\otimes
E^{\vee}$ is slope stable and locally free. However,
$c_1(\omega_X\otimes E^{\vee})=c_1(E^{\vee})=-F_A$ and
$\mu(\omega_X\otimes E^{\vee})<0$ so $\omega_X\otimes E^{\vee}$
has no global sections and $h^2(E)=0$. Therefore $h^0(E)\ge 1$ and
$E$ fits in an exact sequence
\begin{equation}
\label{first_extension} 0\rightarrow\ccI_{Z_1}(D)\rightarrow E
\rightarrow \ccI_{Z_2}\otimes\O_X(-D+F_A)\rightarrow 0
\end{equation}
for some effective divisor $D$ and zero-dimensional subschemes
$Z_1,Z_2$ of $X$ and such that a section $\O_X\rightarrow E$
factors through $\ccI_{Z_1}(D)\rightarrow E$. By stability of $E$
we have:
\begin{displaymath}
D.H<\frac{1}{2}F_A.H\iff D.(L_0+nF_A)<\frac{1}{2}F_A.(L_0+nF_A)
\iff \\D.L_0+nF_A.D<\frac{1}{2}F_A.L_0
\end{displaymath}
This implies that
$$(F_A.L_0)(F_A.D)<nF_A.D<\frac{1}{2}F_A.L_0 .$$
Therefore $0\le F_A.D<\frac{1}{2}$ and hence $F_A.D=0$.

Now note that computation of the second Chern class from sequence
(\ref{first_extension}) gives
$$D.(F_A-D)+\deg Z_1+\deg Z_2=1.$$
So $D^2=\deg Z_1+\deg Z_2-1\ge -1$. Since the intersection form on
an Enriques surface is even, this implies that $D^2\ge 0$.

By stability of $E$ we also have inequality $(F_A-2D).H>0$. So
$F_A-2D\not =0,K_X$ and since $(F_A-2D)^2=4D^2\ge 0$ by
\cite[Chapter VIII, Proposition 16.1.ii]{BHPV} we know that
$|F_A-2D|\not = \emptyset$. Therefore there exists an effective
divisor $C$ such that $F_A\sim 2D+C$. But $h^0(\O_X(F_A))=1$ (see
the proof of Lemma 17.3 in \cite[Chapter VIII]{BHPV}) and hence we
have equality $F_A=2D+C$. The half-pencil $F_A$ has no multiple
components, so the only possibility is that $D=0$.

Now existence of the morphism $\O_X\rightarrow \ccI_{Z_1}$ shows
that $Z_1$ must be empty. This allows us to compute the length of
$Z_2$:
$$1=c_2(E)=c_2(\ccI_Z\otimes\O_X(-D+F_A))+D.(-D+F_A)=l(Z_2).$$
Splitting of sequence (\ref{first_extension}) would contradict
stability of $E$, so the sequence is non-split.
\end{proof}

Sheaves appearing as extensions of the form from the previous
lemma are characterized by the following lemma:

\begin{Lemma} \label{wyznaczanie}
Let $E$ be a sheaf given by a non-trivial extension
$$0\rightarrow\O_X\rightarrow E \rightarrow \ccI_x\otimes\O_X(F_A)\rightarrow 0$$
for some closed point $x\in X$. Then $x\in F_B$, $E$ is locally
free and it is uniquely determined by $x$.
\end{Lemma}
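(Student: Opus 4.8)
The plan is to analyze the extension class governing the sequence and show it imposes exactly the constraint $x \in F_B$, then bootstrap from there. First I would use the long exact sequence obtained by applying $\hom(-, \O_X)$ (equivalently, use the local-to-global $\ext$ spectral sequence) to compute $\ext^1(\ccI_x \otimes \O_X(F_A), \O_X)$. Since $\ccI_x \otimes \O_X(F_A)$ has rank $1$, we have $\Ext^1(\ccI_x\otimes\O_X(F_A),\O_X) \cong \Ext^1(\ccI_x, \O_X(-F_A))$, and the local-to-global sequence reads
$$0 \to H^1(\O_X(-F_A)) \to \ext^1(\ccI_x, \O_X(-F_A)) \to H^0(\Ext^1(\ccI_x,\O_X(-F_A))) \to H^2(\O_X(-F_A)).$$
By Serre duality and Riemann--Roch on the Enriques surface, $H^1(\O_X(-F_A)) \cong H^1(\O_X(F_A) \otimes \omega_X)^{\vee} = H^1(\O_X(F_B))^{\vee}$, and since $F_B$ is a half-pencil this group vanishes (half-pencils are nef with $F_B^2=0$ and $h^0(\O_X(F_B))=1$, $\chi(\O_X(F_B))=1$, forcing $h^1=h^2=0$). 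Also $\Ext^1(\ccI_x, \O_X(-F_A)) \cong \O_x$ (a length-one skyscraper at $x$, as the local Ext of the ideal sheaf of a point on a smooth surface). Hence $\ext^1(\ccI_x\otimes\O_X(F_A),\O_X)$ is at most one-dimensional, and equals $k$ precisely when the connecting map to $H^2(\O_X(-F_A)) \cong H^0(\O_X(F_B))^{\vee} \cong k$ vanishes on the skyscraper summand.

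The second step is to identify that vanishing condition with $x \in F_B$. The connecting homomorphism $H^0(\Ext^1(\ccI_x, \O_X(-F_A))) \to H^2(\O_X(-F_A))$ is, by Serre duality, dual to the restriction/evaluation map $H^0(\O_X(F_B)) \to H^0(\O_X(F_B)|_{?})$ — concretely, the obstruction to lifting a local extension to a global one lives in $H^2$, and Serre-dually this is the pairing of the skyscraper at $x$ against global sections of $\O_X(F_B) = \O_X(F_A)\otimes\omega_X$. That pairing is zero exactly when the unique (up to scalar) section of $\O_X(F_B)$ vanishes at $x$, i.e. when $x$ lies on the half-pencil $F_B$. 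Thus a non-trivial extension exists if and only if $x \in F_B$, and when it exists the extension is unique up to scalar, so $E$ is uniquely determined by $x$. I would double-check the Serre-duality bookkeeping here carefully, since getting $F_A$ versus $F_B$ straight (they differ by $K_X = F_B - F_A$) is where sign/twist errors creep in; this is the step I expect to be the main obstacle.

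Finally, for local freeness: a rank-$2$ sheaf $E$ fitting in $0 \to \O_X \to E \to \ccI_x\otimes\O_X(F_A)\to 0$ is locally free away from $x$ automatically, so the only issue is the stalk at $x$. Restricting to a small neighborhood of $x$ (where $\O_X(F_A)$ trivializes) the extension is classified by an element of $\Ext^1(\ccI_x, \O_{X,x})$, and $E$ is locally free at $x$ if and only if this local extension class is non-zero (this is the standard Serre construction / Cayley--Bacharach criterion for a point: the extension by a point is locally free iff the local class is a generator of the length-one $\Ext^1$). But we have just seen that the global-to-local map sends our non-zero global class to a generator of the skyscraper $\Ext^1(\ccI_x,\O_X(-F_A)) \cong \O_x$ — indeed, that generator surviving is precisely what non-triviality of the extension forces, given that the kernel $H^1(\O_X(-F_A))$ vanishes. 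Hence the local class is non-zero and $E$ is locally free. I would present these three points in this order: compute/bound $\ext^1$, pin down the condition $x\in F_B$ together with uniqueness, then deduce local freeness from non-vanishing of the local extension class.
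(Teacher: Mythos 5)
Your proof is correct, and it reaches the conclusion by a route that is Serre--dual to the paper's rather than identical to it. The paper first dualizes the whole extension group, writing $\ext^1(\ccI_x\otimes\O_X(F_A),\O_X)\cong H^1(X,\ccI_x\otimes\O_X(F_B))^\vee$, and then computes this $H^1$ from the ideal-sheaf sequence $0\to\ccI_x(F_B)\to\O_X(F_B)\to\O_x\to 0$ together with $h^0(\O_X(F_B))=1$, $h^1(\O_X(F_B))=0$: non-vanishing of $H^1$ is equivalent to the evaluation map $H^0(\O_X(F_B))\to\O_x$ failing to be surjective, i.e.\ to $x\in F_B$, and one-dimensionality gives uniqueness; local freeness is then quoted as the Cayley--Bacharach property of a single point with respect to $|F_B|=\{F_B\}$. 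You instead run the local-to-global $\ext$ spectral sequence directly on $\ext^1(\ccI_x,\O_X(-F_A))$ and identify the $d_2$ differential $H^0(\Ext^1(\ccI_x,\O_X(-F_A)))\to H^2(\O_X(-F_A))$ with the Serre dual of that same evaluation map. The two computations are literally dual to one another, so the numerology ($h^1(\O_X(F_B))=0$, $h^0(\O_X(F_B))=1$, the skyscraper of length one) enters in the same way. What your version buys is that local freeness falls out of the same diagram: since $H^1(\Hom(\ccI_x,\O_X(-F_A)))=H^1(\O_X(-F_A))=0$, the map from global to local extension classes is injective, so a non-trivial extension has non-zero local class and $E$ is free at $x$ --- this is in effect a proof of the Cayley--Bacharach criterion in this special case rather than a citation of it. The cost is that you must justify the identification of $d_2$ with the dual of evaluation (the step you yourself flag); that identification is exactly the technical content of the Cayley--Bacharach theorem (e.g.\ \cite[Theorem 5.1.1]{HL1}), so either cite it or carry out the duality bookkeeping explicitly.
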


\begin{proof}
Extensions of $\ccI_x\otimes\O_X(F_A)$ by $\O _X$ are
parameterized by $\ext^1(\ccI_x\otimes\O_X(F_A),\O_X)$. By the
Serre duality this group is dual to $\ext^1(\omega_X,
\ccI_x\otimes\O_X(F_A))=H^1(X,\ccI_x\otimes\O_X(F_A)\otimes
\omega_X)=H^1(X,\ccI_x\otimes\O_X(F_B))$ because $\omega_X\simeq
\O_X(F_B-F_A)$. Consider an exact sequence
$$0\rightarrow\ccI_x\rightarrow \O_X \rightarrow
\O_x\rightarrow 0$$ and tensorize it with $\O_X(F_B)$. By proof of
Lemma 17.3 in \cite[Chapter VIII]{BHPV} we have $h^0(\O_X(F_B))=1$
and $h^1(\O_X(F_B))=0$. Thus we have the following long exact
sequence:
$$0\rightarrow H^0(\ccI_x\otimes\O_X(F_B))\rightarrow H^0(\O_X(F_B))\rightarrow H^0(\O_x(F_B))\rightarrow H^1(\ccI_x\otimes\O_X(F_B)\rightarrow 0.$$
Since $h^0(\O_x(F_B))=1$,  $H^1(\ccI_x\otimes\O_X(F_B))$ has
dimension equal to either $0$ or $1$. But by assumption $E$ comes
from a non-trivial extension, so $h^1(\ccI_x\otimes\O_X(F_B))=1$.
This implies that $h^0(\ccI_x\otimes\O_X(F_B))=1$ and therefore
$x$ lies in the zero set of a nontrivial section of $\O_X(F_B)$,
i.e., $x\in F_B$. Local freeness of $E$ follows from the
Cayley--Bacharach property of a single point $x\in F_B$ with
respect to the linear system $|F_B|=\{F_B\}$.
\end{proof}

\begin{Lemma}
Let $E$ be as in the previous lemma. Then $E$ is slope $H$-stable.
\end{Lemma}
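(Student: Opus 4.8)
The plan is to show that $E$, sitting in the non-split sequence
$$0\rightarrow\O_X\rightarrow E \rightarrow \ccI_x\otimes\O_X(F_A)\rightarrow 0$$
with $x\in F_B$, is slope $H$-stable for our chosen polarization $H=L_0+nF_A$ with $n>F_A.L_0$. Since $c_1(E)=F_A$ is indivisible and $E$ has rank $2$, slope semistability and slope stability coincide, so it suffices to rule out a rank-$1$ subsheaf of the same slope; equivalently, it suffices to show that for every rank-$1$ subsheaf $A\hookrightarrow E$ one has $A.H<\tfrac12 F_A.H$, i.e. (writing $c_1(A)=D$) that $D.L_0+nF_A.D<\tfrac12 F_A.L_0$.

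First I would argue exactly as in the proof of Lemma \ref{sheaf-to-point}: suppose $E$ is not stable, so there is a saturated rank-$1$ subsheaf $\ccI_{Z_1}(D)\hookrightarrow E$ with $D.H\geq\tfrac12 F_A.H$; equivalently $F_A-2D$ pairs non-positively with $H$. Because $H$ lies very close to the ray of $F_A$ and off all walls of type $(2,-4)$, the same computation as before forces $F_A.D\le 0$, hence $F_A.D=0$ (as $F_A$ is nef), and then computing $c_2$ from the extension gives $D^2=\deg Z_1+\deg Z_2-1\geq -1$, so $D^2\geq 0$ by evenness of the intersection form; combined with $(F_A-2D).H\le 0$ and $(F_A-2D)^2=4D^2\ge 0$, one sees $F_A-2D$ (or $D$ itself) is effective or zero and hence, using $h^0(\O_X(F_A))=1$ and that $F_A$ has no multiple components, that $D=0$.

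So the only potential destabilizing subsheaf has $c_1=0$, i.e. is of the form $\ccI_{Z_1}\hookrightarrow E$, which must then factor through the given $\O_X\hookrightarrow E$ (a nonzero map $\ccI_{Z_1}\to\ccI_x\otimes\O_X(F_A)$ would realize $\O_X(-F_A)$ inside $\O_X$ up to ideals, impossible since $F_A$ is effective nontrivial), forcing $Z_1=\emptyset$ and the subsheaf to be precisely $\O_X$. But $\O_X\hookrightarrow E$ does not destabilize: $\O_X.H=0<\tfrac12 F_A.H$ strictly, since $F_A.H=F_A.L_0+nF_A^2=F_A.L_0>0$ because $L_0$ is ample and $F_A$ nonzero effective. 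Hence no subsheaf destabilizes $E$ and $E$ is slope $H$-stable.

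The main obstacle is the bookkeeping in the first step — ensuring the wall-crossing argument from Lemma \ref{sheaf-to-point} applies verbatim to a subsheaf of this particular $E$ rather than to an arbitrary semistable sheaf; but since that argument only used numerics of $c_1$, $c_2$, $H$, and the choice of polarization (not semistability of $E$ itself beyond the hypothetical destabilizer), it transfers directly. The remaining inputs — $h^0(\O_X(F_A))=1$, no multiple components of $F_A$, evenness of the intersection form — are all recalled in the excerpt, so the argument is essentially a rerun of the proof of Lemma \ref{sheaf-to-point} with the observation that equality of slopes would force the destabilizer to be $\O_X$, which has strictly smaller slope, a contradiction.
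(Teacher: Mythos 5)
There is a genuine gap in the middle of your argument. You assert that for a hypothetical destabilizer $\ccI_{Z_1}(D)\hookrightarrow E$ ``the same computation as before forces $F_A.D\le 0$'', on the grounds that the computation in Lemma \ref{sheaf-to-point} ``only used numerics''. It did not: there the divisor $D$ was \emph{effective} (the section $\O_X\to E$ factors through $\ccI_{Z_1}(D)$, so $h^0(\ccI_{Z_1}(D))>0$) and satisfied $D.H<\tfrac12 F_A.H$ by the \emph{assumed} stability of $E$; both facts are needed to squeeze $0\le nF_A.D\le D.L_0+nF_A.D<\tfrac12 F_A.L_0$ and conclude $F_A.D=0$. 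For a destabilizer the inequality is reversed, $D.H\ge\tfrac12 F_A.H$, and $D$ is not known to be effective, so that computation yields nothing (e.g.\ $F_A.D\ge 1$ is perfectly consistent with $D.L_0+nF_A.D\ge\tfrac12 F_A.L_0$). The missing idea, which is the heart of the paper's proof, is to compose the saturated subsheaf $\O_X(C)\hookrightarrow E$ with the projection $E\to\ccI_x\otimes\O_X(F_A)$: either the composite is zero, whence $\O_X(C)\subset\O_X$ and $|-C|\ne\emptyset$ so $C.H\le 0$, or it is nonzero and then $R\sim F_A-C$ is represented by an \emph{effective} divisor passing through $x$. Only with $R$ effective in hand do the numerics work: $R.F_A\ge 1$ would give $C.H\le L_0.F_A-n<0$, so $R.F_A=C.F_A=0$; then $c_2$ gives $CR=-R^2\le 1$, hence $R^2\ge 0$, $C^2=-CR\ge 0$, $C$ effective, and $F_A=C+R$ with $x\in R$ --- contradicting $x\in F_B$ and $F_A\cap F_B=\emptyset$.

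Your endgame is also off target. You aim to conclude $D=0$ from ``$(F_A-2D).H\le 0$, $(F_A-2D)^2\ge 0$, $h^0(\O_X(F_A))=1$ and no multiple components'', but that argument from Lemma \ref{sheaf-to-point} needs $F_A-2D$ to be the effective member of $\pm(F_A-2D)$, which requires $(F_A-2D).H>0$; for a destabilizer the sign is reversed and one would only get $2D-F_A$ effective, from which no contradiction via multiple components of $F_A$ is available. In the correct argument the contradiction is not ``$D=0$'' at all but the incompatibility of $x\in R\subset F_A$ with $x\in F_B$ --- note that your proof never actually uses the location of $x$, which is a warning sign. Finally, the parenthetical claim that a nonzero map $\ccI_{Z_1}\to\ccI_x\otimes\O_X(F_A)$ is impossible is false: twisting by $-F_A$ such a map is essentially multiplication by the equation of $F_A$, which exists precisely because $F_A$ is effective; fortunately that step is unnecessary, since a subsheaf with $c_1=0$ visibly does not destabilize.
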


\begin{proof}
To check stability of $E$ it is sufficient to consider subsheaves
of the form  $\O_X(C)\subset E$. We can also assume that this
subsheaf is saturated, i.e., the quotient $E/\O _X(C)$ is torsion
free. If the linear system $|-C|$ is non-empty  then $C.H<0\leq
\frac{1}{2}F_A.H$. So we can assume that $|-C|=\emptyset$. Using
the short exact sequence
$$0\rightarrow\O_X(-C)\rightarrow E(-C)\rightarrow\ccI_x(F_A-C)\rightarrow 0$$
and the fact that $h^0(E(-C))>0$ we get $h^0(\ccI_x(F_A-C))>0$.
Therefore there exists an effective divisor $R\sim F_A-C$ which
passes through $x$. If $R.F_A\geq 1$ then
$$C.H=(L_0+nF_A).(F_A-R)=L_0.F_A-nR .F_A-R.L_0\leq L_0.F_A-n<0\leq
\frac{1}{2}F_A.H.$$ Therefore we can assume that $R.F_A=0$, which
implies that $C.F_A=0$.

Now note that by assumption there exists a zero-dimensional
subscheme $Z$ such that $E$ sits in a short exact sequence of the
form
$$0\rightarrow\O_X (C)\rightarrow E \rightarrow \ccI_Z\otimes \O_X(R)\rightarrow 0.$$
Computing the second Chern class we get $CR+\deg Z=1$. Therefore
$-R^2=CR\le 1$ which implies that $R^2\ge 0$ and $CR\le 0$. But
this implies that $C^2=-CR\ge 0$, so by \cite[Proposition
VIII.16.1]{BHPV} the linear system $|C|$ is non-empty. Therefore
$F_A=C+R$ which contradicts the fact that $x$ lies on $R$.
\end{proof}

Summarizing we have the following corollary:

\begin{Corollary} \label{bijection}
There exists a bijection between closed points  of $M_X(2,1,F_A)$
and $F_B.$
\end{Corollary}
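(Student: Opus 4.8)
The plan is to combine the three lemmas just proved into the stated set-theoretic bijection. By Lemma \ref{sheaf-to-point}, every rank $2$ Gieseker $H$-semistable sheaf $E$ with $c_1E=F_A$, $c_2E=1$ sits in a non-split extension
$$0\to\O_X\to E\to \ccI_x\otimes\O_X(F_A)\to 0$$
for some closed point $x\in X$; this assigns to the isomorphism class of $E$ a point $x$. By Lemma \ref{wyznaczanie}, such a point necessarily lies on $F_B$, and conversely, for each $x\in F_B$ there is (up to scalar) a unique non-trivial extension class in $\ext^1(\ccI_x\otimes\O_X(F_A),\O_X)$, hence a well-defined isomorphism class of sheaf $E_x$; moreover $E_x$ is locally free. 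The third lemma says each such $E_x$ is slope $H$-stable, hence Gieseker $H$-stable, so it defines a point of $M_X(2,1,F_A)$. Thus I would define the two maps: $\Phi\colon M_X(2,1,F_A)\to F_B$ sending $[E]$ to the point $x$ produced by Lemma \ref{sheaf-to-point}, and $\Psi\colon F_B\to M_X(2,1,F_A)$ sending $x$ to $[E_x]$.

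The core of the argument is then to check that $\Phi$ and $\Psi$ are mutually inverse. That $\Psi$ is well-defined up to the one-dimensionality of the $\ext^1$ group follows from Lemma \ref{wyznaczanie}: the extension is ``uniquely determined by $x$'', so $[E_x]$ does not depend on the choice of non-zero extension class (rescaling the class gives an isomorphic sheaf via an automorphism of the central term). For $\Psi\circ\Phi=\id$: given $[E]$, Lemma \ref{sheaf-to-point} presents $E$ as a non-split extension of $\ccI_x\otimes\O_X(F_A)$ by $\O_X$, and by Lemma \ref{wyznaczanie} any two such non-split extensions for the same $x$ give isomorphic sheaves, so $E\simeq E_x$, i.e. $\Phi$ followed by $\Psi$ returns $[E]$. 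For $\Phi\circ\Psi=\id$: starting from $x\in F_B$, the sheaf $E_x$ by construction admits the non-split extension presentation with that particular $x$; one must observe that this is the \emph{only} point that can occur, i.e. $E_x$ cannot simultaneously be written as an extension of $\ccI_y\otimes\O_X(F_A)$ by $\O_X$ for $y\ne x$. But the proof of Lemma \ref{sheaf-to-point} shows the point is recovered intrinsically from $E$ (via $h^0(E)=1$ forcing a unique sub-line-bundle $\O_X\hookrightarrow E$ with torsion-free quotient $\ccI_x\otimes\O_X(F_A)$, whose colength-$1$ ideal pins down $x$), so $\Phi(E_x)=x$.

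The one point needing care — the ``main obstacle'' at the level of this set-theoretic statement — is the uniqueness feeding into $\Phi\circ\Psi=\id$: namely that $h^0(E_x)=1$, so that the subsheaf $\O_X\subset E_x$, and hence $x$, is canonically determined by $E_x$. From the extension sequence, $h^0(\O_X)=1$ and $h^0(\ccI_x\otimes\O_X(F_A))\le h^0(\O_X(F_A))=1$ (using $h^0(\O_X(F_A))=1$ from \cite[Chapter VIII]{BHPV}), and in fact $h^0(\ccI_x\otimes\O_X(F_A))=0$ since the unique section of $\O_X(F_A)$ cuts out $F_A$, a curve with no multiple components, which cannot contain the point $x\in F_B$ (as $F_A\cap F_B=\emptyset$ for the two multiple fibres); therefore the long exact sequence gives $h^0(E_x)=1$. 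Hence the sub-line-bundle is unique, $\Phi$ is well-defined and genuinely inverse to $\Psi$. Assembling these observations yields the bijection between closed points of $M_X(2,1,F_A)$ and $F_B$. (The upgrade of this bijection to a scheme isomorphism is postponed to the later subsections on singularities and families, as announced.)
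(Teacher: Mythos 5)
Your proposal is correct and follows essentially the same route as the paper: the lemmas already give existence in both directions, and the only genuine content is the uniqueness of the point $x$ attached to $E$, which you settle exactly as the paper does, by noting $H^0(\ccI_x\otimes\O_X(F_A))=0$ because $x\in F_B$ cannot lie on $F_A$, so $h^0(E)=1$ and $x$ is the zero locus of the unique section. No substantive differences.
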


\begin{proof}
The only fact that remained to prove is that for a sheaf $E$, a
point $x\in F_B$ such that we have a non-split exact sequence of
the form
$$0\rightarrow\O_X\rightarrow E \rightarrow \ccI_x\otimes\O_X(F_A)\rightarrow 0$$
is uniquely determined. To prove this note that
$H^0(\ccI_x\otimes\O_X(F_A))=0$, since $x$ does not lie on $F_A$.
Therefore $H^0(E)$ is one-dimensional and $x$ is the zero set of
the unique (up to a multiple by a scalar) non-trivial section of
$E$.
\end{proof}

\subsection{Singularities of $M_X(2,1,F_A)$} \label{singularities}

In order to analyze smoothness of $M_X(2,1,F_A)$ we have to
consider $\ext^2(E,E)=(\hom(E,E\otimes \omega_X))^*$. Since $E$ is
slope stable and of the same slope as $E(K_X)$ every nonzero
homomorphism $s\in\hom(E,E\otimes \omega_X)$ gives rise to an
isomorphism.  Hence $\ext^2(E,E)$ vanishes if and only if $E$ and
$E(K_X)$ are not isomorphic. Both $E$ and $E(K_X)$ represent
points in $M_X(2,F_A,1)$ so we can present them as extensions:
\begin{eqnarray}
\label{ciag1}
0\rightarrow\O_X\rightarrow E\rightarrow\ccI_{x_0}\otimes\O_X(F_A)\rightarrow 0\\
\label{ciag2} 0\rightarrow\O_X\rightarrow
E(K_X)\rightarrow\ccI_{x_1}\otimes\O_X(F_A)\rightarrow 0
\end{eqnarray}
for some uniquely determined $x_0,x_1\in F_B$. In particular, $E$
and $E(K_X)$ are isomorphic if and only if $x_0=x_1$. Now we need
the following lemma:

\begin{Lemma}
Let $\mfm_{x,F_B}$ denote the ideal sheaf of a point $x$ in $F_B$.
Then the sheaves $\mfm_{x_1,F_B}$ and $\mfm_{x_0,F_B}\otimes
\O_{F_B}(F_B)$ are isomorphic as $\O_{F_B}$-modules.
\end{Lemma}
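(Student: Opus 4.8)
The plan is to compare the two extensions (\ref{ciag1}) and (\ref{ciag2}) directly on the curve $F_B$. Restricting sequence (\ref{ciag1}) to $F_B$ and using that $E$ is locally free gives a way to read off $\mfm_{x_0,F_B}$ intrinsically; the twist by $\O_X(F_B)$ in sequence (\ref{ciag2}) will appear because $\omega_X\simeq\O_X(F_B-F_A)$, so that tensoring (\ref{ciag1}) by $\omega_X$ and comparing with (\ref{ciag2}) forces the extra twist. Concretely, I would first tensor the defining sequence of $E$ with $\omega_X=\O_X(F_B-F_A)$ to get
\begin{displaymath}
0\rightarrow \O_X(F_B-F_A)\rightarrow E(K_X)\rightarrow \ccI_{x_0}\otimes\O_X(F_B)\rightarrow 0,
\end{displaymath}
and then relate this to (\ref{ciag2}). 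The key observation is that $H^0(\O_X(F_B-F_A))=0$ (as $F_B-F_A=K_X$ is nontrivial with square $0$ and is not effective), so the section $\O_X\hookrightarrow E(K_X)$ from (\ref{ciag2}) cannot factor through $\O_X(F_B-F_A)$; hence its image in $\ccI_{x_0}\otimes\O_X(F_B)$ is a nonzero section vanishing exactly at $x_0$ — but such a section exists (with that vanishing locus) precisely because $x_0\in F_B$, and it is unique up to scalar by Lemma \ref{wyznaczanie}.

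Next I would pass to $F_B$. Restricting $E$ to $F_B$ and using that $\O_X(F_A)|_{F_B}\simeq\O_{F_B}$ (since $F_A\cdot F_B=0$ and the relevant restriction is trivial on the half-pencil — this follows from $2F_A\sim 2F_B$ so $\O_X(F_A-F_B)|_{F_B}$ is $2$-torsion in $\pic F_B$, and one checks it is in fact trivial) one obtains on $F_B$ a short exact sequence expressing $E|_{F_B}$ as an extension of $\mfm_{x_0,F_B}$ by $\O_{F_B}$. Similarly, restricting the twisted sequence above, and using $\O_X(F_B)|_{F_B}\simeq\omega_{F_B}$ together with $\O_X(F_B-F_A)|_{F_B}\simeq\O_{F_B}$ (again by the torsion argument), $E(K_X)|_{F_B}=E|_{F_B}$ is realized as an extension of $\mfm_{x_0,F_B}\otimes\O_{F_B}(F_B)$ by $\O_{F_B}$. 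On the other hand, restricting (\ref{ciag2}) directly gives $E|_{F_B}$ as an extension of $\mfm_{x_1,F_B}$ by $\O_{F_B}$. Comparing the cokernels of the common subsheaf $\O_{F_B}\hookrightarrow E|_{F_B}$ in these two descriptions — which amounts to checking that the two sub-line-bundle-inclusions $\O_{F_B}\hookrightarrow E|_{F_B}$ agree up to automorphism of $E|_{F_B}$, using $h^0(E|_{F_B})=1$ — yields the desired isomorphism $\mfm_{x_1,F_B}\simeq \mfm_{x_0,F_B}\otimes\O_{F_B}(F_B)$ of $\O_{F_B}$-modules.

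The main obstacle will be the bookkeeping of restrictions to $F_B$: one must make sure that the inclusion $\O_X\hookrightarrow E$ remains a subbundle inclusion after restriction (equivalently that its cokernel $\ccI_{x_0}\otimes\O_X(F_A)$ stays torsion-free along $F_B$, which is clear since $x_0\notin F_A$ and $x_0\in F_B$ is a single point, so $\Tor_1$ contributes exactly the skyscraper correcting $\O_{F_B}$ to $\mfm_{x_0,F_B}$), and that all the line-bundle restrictions $\O_X(F_A)|_{F_B}$, $\O_X(F_B)|_{F_B}$, $\omega_X|_{F_B}$ are identified correctly — in particular the subtle point that $\O_X(F_A-F_B)|_{F_B}$, a priori a nontrivial $2$-torsion element of $\pic^0 F_B$, is actually trivial on the half-pencil $F_B$ (this is a standard fact about Enriques elliptic fibrations; $F_B$ is a half-fibre and $\O_X(2F_B)|_{F_B}=\O_{F_B}$ while $\O_X(F_A)|_{F_B}$ is the distinguished trivialization coming from $F_A\cap F_B=\emptyset$). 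Once these identifications are in place, the isomorphism follows formally from uniqueness of the section of $E|_{F_B}$ and the snake lemma applied to the two presentations.
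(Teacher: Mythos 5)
Your first paragraph is essentially the paper's own starting point: tensor (\ref{ciag1}) by $\omega_X$, use $h^0(\omega_X)=0$ to see that the section $\O_X\to E(K_X)$ coming from (\ref{ciag2}) induces a nonzero map $\alpha\colon\O_X\to\ccI_{x_0}\otimes\O_X(F_B)$. But the paper then \emph{stays on $X$}: it completes the two extensions to a $3\times 3$ diagram and computes $\coker\alpha$ twice, once as the cokernel of the section of $\O_X(F_B)$ cutting out $F_B$ (corrected at $x_0$), which gives $\mfm_{x_0,F_B}\otimes\O_{F_B}(F_B)$, and once as the cokernel of $\omega_X\hookrightarrow\ccI_{x_1}\otimes\O_X(F_A)$, which gives $\mfm_{x_1,F_B}\otimes\O_{F_B}(F_A)\simeq\mfm_{x_1,F_B}$ because $F_A\cap F_B=\emptyset$. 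The asymmetry between these two computations is precisely where the twist by $\O_{F_B}(F_B)$ comes from.

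Your second paragraph, by contrast, rests on a false assertion: that $\O_X(F_B-F_A)|_{F_B}$ (equivalently $\omega_X|_{F_B}$, equivalently $\O_{F_B}(F_B)$, since $\O_{F_B}(F_A)$ is indeed trivial) is trivial ``by the torsion argument.'' It is not. The normal bundle $\O_{F_B}(F_B)$ of a half-fibre is a \emph{nontrivial} $2$-torsion element of $\pic^0 F_B$ --- this is exactly what makes $2F_B$ a multiple fibre, and the paper checks $h^0(\O_{F_B}(F_B))=0$ in the proposition immediately following this lemma. Your trivialization is load-bearing: it gives $E(K_X)|_{F_B}\simeq E|_{F_B}$ and collapses the desired conclusion to $\mfm_{x_1,F_B}\simeq\mfm_{x_0,F_B}$, i.e.\ $x_1=x_0$ for every $E$, which contradicts part (1) of that proposition and would force $\ext^2(E,E)\neq 0$ at every point, destroying the subsequent smoothness analysis. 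If one restricts correctly, the subsheaf coming from (\ref{ciag2}) is $\O_{F_B}\subset E(K_X)|_{F_B}\simeq E|_{F_B}\otimes\O_{F_B}(F_B)$, while the one coming from twisting the restriction of (\ref{ciag1}) is $\O_{F_B}(F_B)$; these are non-isomorphic line bundles, so there is no ``common subsheaf'' whose two cokernels you can compare, and the $h^0(E|_{F_B})=1$ uniqueness step (itself unjustified, given the torsion in $\ccI_{x_0}|_{F_B}$) has nothing to act on. The repair is to abandon the restriction to $F_B$ and carry out the cokernel comparison on $X$ as above.
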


\begin{proof}
Let us consider the following commutative diagram
\begin{center}
$\xymatrix{
&& 0&0&\\
0\ar[r]& {\ker \beta} \ar[r]& \ccI_{x_1}\otimes\O _X(F_A) \ar[r]^{\beta}\ar[u]&\coker \alpha \ar[u]&\\
0\ar[r]&\omega_X\ar[r]\ar[u]_{\gamma}&E(K_X)\ar[r]\ar[u] &\ccI_{x_0}\otimes \O_X(F_B)\ar[r]\ar[u]&0      \\
& 0\ar[r]\ar[u]& \O_X\ar[r]^{=}\ar[u]& \O_X\ar[u]_{\alpha}\ar[r] &0\\
& &0\ar[u]&0\ar[u]&\\
}$
\end{center}
where the middle sequence in this diagram is obtained from
sequence (\ref{ciag1}) by multiplying by $\omega_X$ and using
$K_X=F_B-F_A$. Note that $\gamma$ in this diagram must be an
isomorphism and $\beta$ must be surjective. In particular, we have
a short exact sequence
$$0\to \omega_X{\longrightarrow} \ccI_{x_1}\otimes\O _X(F_A)\longrightarrow \coker \alpha \to 0.$$
By definition we also have an isomorphism $\coker \alpha \simeq
\mfm_{x_0,F_B}\otimes \O_{F_B}(F_B)$. On the other hand the above
exact sequence implies that $\coker \alpha \simeq
\mfm_{x_1,F_B}\otimes \O_{F_B}(F_A)\simeq \mfm_{x_1,F_B}$, which
proves the required assertion.
\end{proof}

\begin{Proposition}
Let $E$ and $E(K)$ be determined by  $x_0,x_1\in F_B$,
respectively.
\begin{enumerate}
\item If $x_0$ is a smooth point of $F_B$ then $x_1$ is also a
smooth point of $F_B$ and it is the zero set of the unique (up to
scalar) section of $\O_{F_B}(F_B+x_0)$. In particular, $x_1\ne
x_0$.
\item If $x_0$ is a singular point of $F_B$ then $x_1=x_0$.
\end{enumerate}
\end{Proposition}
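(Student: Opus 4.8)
The previous lemma gives an isomorphism of $\O_{F_B}$-modules $\mfm_{x_1,F_B}\simeq\mfm_{x_0,F_B}\otimes\O_{F_B}(F_B)$. The plan is to extract from this isomorphism a linear-equivalence of Cartier divisors on $F_B$ and then translate that back into a statement about where $x_1$ sits. First I would dualize: $\mfm_{x,F_B}$ is the ideal sheaf of a point on $F_B$, and when $x$ is a \emph{smooth} point of $F_B$ it is an invertible sheaf, namely $\O_{F_B}(-x)$. So on the smooth locus the isomorphism reads $\O_{F_B}(-x_1)\simeq\O_{F_B}(F_B-x_0)$, i.e. $x_1$ is a Cartier divisor with $\O_{F_B}(x_1)\simeq\O_{F_B}(F_B+x_0)$ wherever everything is invertible. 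The first thing to nail down is therefore that $x_0$ smooth forces $x_1$ smooth: if $x_1$ were the singular point, $\mfm_{x_1,F_B}$ would fail to be invertible at that point, whereas the right-hand side $\mfm_{x_0,F_B}\otimes\O_{F_B}(F_B)$ is invertible everywhere (as $x_0$ is smooth and $\O_{F_B}(F_B)$ is a line bundle), contradicting the isomorphism. Symmetrically — since the construction is reciprocal, $E(K_X)(K_X)\simeq E$ — one gets that $x_0$ and $x_1$ are simultaneously smooth or simultaneously singular, which already packages the two cases.

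For part (1), having reduced to the smooth locus, I would identify $H^0(F_B,\O_{F_B}(F_B+x_0))$. Restricting the exact sequence $0\to\O_X\to\O_X(F_B)\to\O_{F_B}(F_B)\to 0$ and using $h^0(\O_X(F_B))=1$, $h^1(\O_X(F_B))=0$ (quoted from \cite[VIII.17.3]{BHPV}) together with $h^0(\O_X)=1$, $h^1(\O_X)=0$ gives $h^0(\O_{F_B}(F_B))=1$ and $h^1(\O_{F_B}(F_B))=0$; then $h^1(\O_{F_B})=1$ on the genus-$1$ curve $F_B$ forces $\deg\O_{F_B}(F_B)=0$, so $\deg\O_{F_B}(F_B+x_0)=1$, whence $h^0(\O_{F_B}(F_B+x_0))=1$ (Riemann–Roch on $F_B$, $\chi=1$, and $h^1=0$ since the degree is positive). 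Its unique section has a single zero, which is exactly $x_1$ by the displayed isomorphism $\O_{F_B}(x_1)\simeq\O_{F_B}(F_B+x_0)$. Finally $x_1\ne x_0$: if $x_1=x_0$ then $\O_{F_B}(F_B)\simeq\O_{F_B}$, so $F_B$ would be a trivial (hence torsion-free) divisor class of a rational-like type on an elliptic curve; more concretely, $K_X\cdot F_B = (F_B-F_A)\cdot F_B = -F_A\cdot F_B$, and one checks $F_A\cdot F_B\ne 0$, so $\O_{F_B}(F_B)=\O_{F_B}(-K_X|_{F_B})$ is a nontrivial degree-$0$ line bundle (it is $2$-torsion, being the restriction of $K_X$), hence has no section, contradiction with $x_1=x_0$ forcing it trivial.

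For part (2), once $x_0$ singular $\Rightarrow$ $x_1$ singular is established as above, one is done the moment one knows $F_B$ has at most one singular point. That follows because $F_B$ is a half-pencil of an Enriques elliptic fibration: it is an irreducible (reduced) curve of arithmetic genus $1$, so its only possible singularity is a single node or cusp, and in either case there is a unique singular point; hence $x_0$ and $x_1$, both singular, must coincide.

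The main obstacle I expect is \textbf{part (1)}, specifically the clean passage from the module isomorphism $\mfm_{x_1,F_B}\simeq\mfm_{x_0,F_B}\otimes\O_{F_B}(F_B)$ to the divisorial statement $\O_{F_B}(x_1)\simeq\O_{F_B}(F_B+x_0)$ — one must take local duals / double duals carefully at the point, and confirm that on the smooth locus $\mfm_{x,F_B}$ really is the line bundle $\O_{F_B}(-x)$ and not merely a rank-one torsion-free sheaf that happens to agree generically. The arithmetic facts ($\deg\O_{F_B}(F_B)=0$, $F_A\cdot F_B\ne 0$, uniqueness of the singular point) are standard Enriques-surface bookkeeping and should go through routinely.
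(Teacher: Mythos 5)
Your reduction of part (1) to the smooth locus and the smooth/singular dichotomy for $x_0,x_1$ is sound and is essentially what the paper does. However, two things go wrong. First, a computational slip in part (1): from $0\to\O_X\to\O_X(F_B)\to\O_{F_B}(F_B)\to 0$ the map $H^0(\O_X)\to H^0(\O_X(F_B))$ is injective between two one-dimensional spaces, hence an isomorphism, so $h^0(\O_{F_B}(F_B))=0$, not $1$ as you claim; your value is inconsistent with your own (correct) later assertion that $\O_{F_B}(F_B)$ is a nontrivial degree-zero bundle with no sections, and your Riemann--Roch step ($h^0=1$, $h^1=0$ forcing $\deg=0$) does not add up. The vanishing $h^0(\O_{F_B}(F_B))=0$ is exactly what the paper uses both to get $h^0(\O_{F_B}(F_B+x_0))=1$ (via $0\to\O_{F_B}(F_B)\to\O_{F_B}(F_B+x_0)\to\O_{x_0}\to 0$, avoiding any appeal to "$h^1=0$ since the degree is positive", which is delicate on a non-integral curve) and to conclude $x_1\ne x_0$. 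Note also that $F_A\cdot F_B=0$ (the half-pencils lie in distinct fibres), so your proposed justification of the nontriviality of $\O_{F_B}(F_B)$ does not work as stated.

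The genuine gap is in part (2): you assume $F_B$ is irreducible, so that it has at most one singular point. This is false in general. On a nodal Enriques surface a half-pencil can be a cycle of rational curves (a multiple fibre of type $I_n$ with $n\ge 2$), which is reduced of arithmetic genus $1$ but has $n$ distinct nodes; this is precisely the situation responsible for the reducible moduli spaces advertised in the introduction, so it cannot be excluded. Knowing that $x_1$ is \emph{some} singular point is then not enough. The paper closes this gap with a local argument: for an irreducible component $C$ of $F_B$ through $x_0$ one shows $\mfm_{x_0,F_B}\otimes\O_C\simeq\O_C(-x_0)\oplus\O_{x_0}$ (a computation in $k[[a,b]]/(ab)$), so that $\mfm_{x_1,F_B}\otimes\O_C\simeq\mfm_{x_0,F_B}\otimes\O_{F_B}(F_B)\otimes\O_C$ has torsion, which forces $x_1\in C$; since this holds for every component through $x_0$, one gets $x_1=x_0$. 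Some such argument pinning $x_1$ to the same components as $x_0$ is indispensable and is missing from your proposal.
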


\begin{proof}
Let us first recall that by \cite[Theorem 5.7.5]{CD} the
half-pencils $F_A$ and $F_B$ have only nodal singularities. Let
$C$ be a curve with a nodal singularity at $x$. Then $m_{x,C}$ is
not a line bundle. Otherwise, the maximal ideal of the completion
${\hat{\O} _{C,x}}\simeq k[[a,b]]/(ab)$ of the local ring of $C$
at $x$ would be generated by one element. But this is not
possible.

Let us now assume that $x_0$ is a smooth point of $F_B$. Then
$\mfm_{x_0,F_B}\simeq \O_{F_B}(-x_0)$. Therefore by the above
lemma $\mfm_{x_1,F_B}\simeq \O_{F_B}(F_B-x_0)$ is a line bundle,
which implies that $x_1$ is a smooth point of $F_B$ and
$\O_{F_B}(x_1)\simeq \O_{F_B}(F_B+x_0)$ (we use the fact that
$\O_{F_B}(2F_B)$ is a trivial line bundle). From the short exact
sequence
$$0\to \O_X\to \O_X(F_B) \to \O_{F_B} (F_B)\to 0 $$
we see that $h^0(\O_{F_B}(F_B))=0$ (in particular
$\O_{F_B}(F_B)\simeq \O_{F_B}(x_1-x_0)$ is non-trivial and hence
$x_1\ne x_0$). So from the short exact sequence
$$0\to \O_{F_B}(F_B)\to \O_{F_B}(F_B+x_0)\to \O_{x_0}(F_B+x_0)\simeq \O_{x_0}\to 0$$
we see that $h^0(\O_{F_B}(F_B+x_0))=1$ which proves the first part
of the proposition.

To prove the second part let us assume that $x_0$ is a singular
point of $F_B$. Then $x_1$ is also a singular point of $F_B$,
since $\mfm_{x_1,F_B}$ is not a line bundle. In particular, if
$F_B$ is irreducible then $x_1=x_0$. So we can assume that $F_B$
is reducible. In this case all irreducible components of $F_B$ are
smooth. Let $C$ be an irreducible component of $F_B$ containing
$x_0$. Then we claim that $\mfm_{x_0,F_B}\otimes \O_C \simeq
\O_C(-x_0)\oplus \O_{x_0}$. To prove this note that we have a
canonical surjection $\mfm_{x_0,F_B}\to \mfm_{x_0,C}=\O_C(-x_0)$.
Tensoring it by $\O_C$ we need to prove that the kernel is
isomorphic to the sheaf $\O_{x_0}$. We can do it locally passing
to local completions at the maximal ideal of $\O_{C,x}$. Then the
above map looks like the map
$$(a,b)\otimes _{k[[a,b]]/(ab)} k[[a,b]]/(a) \to (a,b)\cdot k[[a,b]]/(a)\simeq bk[[b]]$$
and the kernel of this map is generated by $a\otimes 1$, which
proves our claim.

The above claim implies that $\mfm_{x_1,F_B}\otimes \O_C$ contains
torsion which is possible only if $x_1$ lies on $C$. But this
implies that $x_1$ lies on the same irreducible components of
$F_B$ as $x_0$ and hence $x_1=x_0$.
\end{proof}

\medskip

The above proposition implies the following corollary:

\begin{Corollary}
\label{E_izo_EK} Let $[E]\in M_X(2,F_A,1)$. Then $E\simeq E(K)$ if
and only if the point $x_0$ associated to $E$ is a singular point
of $F_B$.
\end{Corollary}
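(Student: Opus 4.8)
The plan is to derive the corollary directly from the preceding Proposition, using the characterization of $\ext^2(E,E)$ established at the start of Subsection \ref{singularities}. Recall that it was shown there that $\ext^2(E,E)=(\hom(E,E\otimes\omega_X))^*$ and that, by slope stability of $E$ together with $\mu_H(E)=\mu_H(E(K_X))$, any nonzero homomorphism $E\to E\otimes\omega_X$ is an isomorphism; hence $E\simeq E(K)$ if and only if $\ext^2(E,E)\ne 0$. Thus the content of the corollary is really the bi-implication between $E\simeq E(K)$ and the statement that the point $x_0\in F_B$ associated to $E$ via sequence (\ref{ciag1}) is a singular point of $F_B$.

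First I would fix $[E]\in M_X(2,F_A,1)$ and let $x_0,x_1\in F_B$ be the points determined by the extensions (\ref{ciag1}) and (\ref{ciag2}), which exist and are unique by Corollary \ref{bijection} (and the uniqueness part of its proof). The key observation, already recorded just before the Proposition, is that $E\simeq E(K)$ if and only if $x_0=x_1$: indeed if $E\simeq E(K)$ then twisting (\ref{ciag1}) gives another presentation of $E(K)$ as such an extension, which by uniqueness forces $x_1=x_0$; conversely if $x_0=x_1$ then $E$ and $E(K)$ are both the unique sheaf fitting into a non-split extension $0\to\O_X\to(-)\to\ccI_{x_0}\otimes\O_X(F_A)\to 0$ (uniqueness by Lemma \ref{wyznaczanie}), so $E\simeq E(K)$.

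Then I would simply invoke the Proposition. If $x_0$ is a smooth point of $F_B$, part (1) of the Proposition gives $x_1\ne x_0$, hence $E\not\simeq E(K)$. If $x_0$ is a singular point of $F_B$, part (2) gives $x_1=x_0$, hence $E\simeq E(K)$. Combining these two cases with the equivalence $E\simeq E(K)\iff x_0=x_1$ yields exactly the statement of the corollary.

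Since the real work has already been done in the Proposition and in the discussion preceding it, there is essentially no obstacle here; the only point requiring a little care is making explicit the equivalence $E\simeq E(K)\iff x_0=x_1$, i.e.\ that the bijection of Corollary \ref{bijection} intertwines the involution $[E]\mapsto[E(K)]$ on $M_X(2,F_A,1)$ with the involution $x_0\mapsto x_1$ on $F_B$ described in the Proposition. This follows formally from the uniqueness statements in Lemma \ref{wyznaczanie} and Corollary \ref{bijection}, so the proof is short.
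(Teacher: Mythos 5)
Your proposal is correct and follows exactly the paper's intended route: the paper states the corollary as an immediate consequence of the Proposition, having already recorded before it that $E\simeq E(K_X)$ if and only if $x_0=x_1$, and your argument just makes that deduction explicit. No issues.
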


\subsection{Family of sheaves} In order to obtain a morphism from
$F_B$ to $M_X(2,1,F_A)$ we have to construct a family of sheaves
$\E$ on $F_B\times X$ such that for every $x\in F_B$ the sheaf
$[\E_{|\{x\}\times X}]\in M_X(2,1,F_A)$. Obviously, we will try to
do in such a way that $\E_{|\{x\}\times X}$ corresponds to the
nontrivial extension of $\ccI_x\otimes\O_X(F_A)$ by $\O_X$.

Let $\Gamma$ denote the graph in the product $F_B\times X$ of the
inclusion $F_B\subset X$ and $\ccI_\Gamma$ be its ideal sheaf. Let
$\pi_i$ denote the projection from $F_B\times X$ on the $i$-th
factor. Observe that, since $\ext^1(\ccI_x\otimes\O_X(F_A),\O_X)$
is one dimensional for every $x\in F_B$, the sheaf
$$\L=\underline{\ext}^1_{\pi_1}(\ccI_\Gamma\otimes \pi^*_2(\O_X(F_A)), \O_{F_B\times X})$$ is a line bundle on $F_B$.
By \cite[p.137]{putinar} there is a spectral sequence
$$H^p(\underline{\ext}^q_{\pi_1}(\ccI_\Gamma\otimes\pi^*_2\O_X(F_A),\pi^*_1(\L^\vee)))\\
\rightarrow
\ext^{p+q}(\ccI_\Gamma\otimes\pi^*_2\O_X(F_A),\pi^*_1(\L^\vee))$$
which gives a long exact sequence:
\begin{eqnarray}
\nonumber 0\rightarrow H^1(\underline{\hom}_{\pi_1}(\ccI_\Gamma\otimes\pi^*_2\O_X(F_A),\pi^*_1(\L^\vee)))\rightarrow \ext^{1}(\ccI_\Gamma\otimes\pi^*_2\O_X(F_A),\pi^*_1(\L^\vee))\rightarrow \\
\nonumber\rightarrow
H^0(\underline{\ext}^1_{\pi_1}(\ccI_\Gamma\otimes\pi^*_2\O_X(F_A),\pi^*_1(\L^\vee)))\rightarrow
H^2(\underline{\hom}_{\pi_1}(\ccI_\Gamma\otimes\pi^*_2\O_X(F_A),\pi^*_1(\L^\vee))).
\end{eqnarray}
For a fixed $x\in F_B$ we have
$\hom(\ccI_x\otimes\O_X(F_A),\O_X)=0$. Hence
$\underline{\hom}_{\pi_1}(\ccI_\Gamma\otimes\pi^*_2\O_X(F_A),\pi^*_1(\L^\vee))=0$.
The isomorphism
\begin{eqnarray}
\nonumber \ext^{1}(\ccI_\Gamma\otimes\pi^*_2\O_X(F_A),\pi^*_1(\L^\vee))\simeq H^0(\underline{\ext}^1_{\pi_1}(\ccI_\Gamma\otimes\pi^*_2\O_X(F_A),\pi^*_1(\L^\vee)))\simeq\\
\nonumber\simeq H^0(\L\otimes\L^{\vee})\simeq H^0(\O_{F_B})
\end{eqnarray}
shows that with $1\in H^0(\O_{F_B})$ we can naturally associate an
extension
$$0\rightarrow\O_{F_B\times X}\rightarrow \E\rightarrow\ccI_\Gamma\otimes\pi^*_2\O_X(F_A)\otimes \pi^*_1(\L)\rightarrow 0$$
on $F_B\times X$, where $\E$ is locally free and after restricting
to $\{x\}\times X$ gives the sheaf associated to $x$. The sheaf
$\E$ can be regarded as a family of sheaves parameterized by
$F_B$. Therefore we get a morphism $F_B\rightarrow M_X(2,F_A,1)$.

\begin{Corollary}
The moduli space $M_X(2,F_A,1)$ is a connected reduced curve.
\end{Corollary}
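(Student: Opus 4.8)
The plan is to combine the set-theoretic bijection from Corollary~\ref{bijection} with the morphism $F_B\to M_X(2,F_A,1)$ just constructed, and to analyze the scheme structure via the deformation theory sketched in Subsection~\ref{singularities}. First I would record that $F_B$ is connected: it is a half-pencil, i.e.\ a fibre (up to the factor $2$) of the elliptic fibration $X\to\PP^1$, hence connected and reduced (in the reducible case its components are smooth rational curves meeting in nodes, forming a connected configuration of Kodaira type). The morphism $\phi\colon F_B\to M_X(2,F_A,1)$ coming from the family $\E$ is bijective on closed points by Corollary~\ref{bijection}, since $\E_{|\{x\}\times X}$ is the sheaf attached to $x$. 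In particular $M_X(2,F_A,1)$ is the set-theoretic image of a connected scheme, so it is connected.

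For reducedness the key input is the dimension/obstruction count. At a point $[E]\in M_X(2,F_A,1)$ the Zariski tangent space is $\ext^1(E,E)_0$ (the trace-free part, since we fix the determinant), and the obstructions lie in $\ext^2(E,E)_0$. A Riemann--Roch / Euler-characteristic computation for $E$ with $c_1=F_A$, $c_2=1$, $\rk=2$ gives $\sum(-1)^i\ext^i(E,E)=$ a small explicit number; combined with $\ext^0(E,E)=k$ (stability) and $\ext^0(E,E)_0=0$, one finds $\ext^1(E,E)_0-\ext^2(E,E)_0$ is constant, equal to $1$. By Corollary~\ref{E_izo_EK}, $\ext^2(E,E)_0=(\hom(E,E(K))_0)^*$ vanishes exactly when the associated point $x_0\in F_B$ is a smooth point of $F_B$, in which case $\ext^1(E,E)_0$ is $1$-dimensional and $M_X(2,F_A,1)$ is smooth of dimension $1$ at $[E]$. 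At the finitely many nodes of $F_B$ one has $\ext^2(E,E)_0\cong k$ and $\ext^1(E,E)_0\cong k^2$, so a priori the scheme could be singular there; but since $\phi$ is a bijective morphism from the reduced curve $F_B$ and $M_X(2,F_A,1)$ is generically reduced of dimension $1$, the local ring at such a point is a $1$-dimensional quotient of a $2$-dimensional tangent space, and one checks it is still reduced — e.g.\ because $F_B$ itself has an ordinary node there and the obstruction map is the natural one, so $M_X(2,F_A,1)$ has (at worst) a node, which is reduced. Thus $M_X(2,F_A,1)$ is reduced everywhere and purely $1$-dimensional, i.e.\ a reduced curve.

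The main obstacle is precisely the analysis at the singular points $x_0$ of $F_B$: there the tangent space jumps to dimension $2$, so one cannot conclude reducedness from smoothness, and must instead argue that the obstruction $\ext^1(E,E)_0\to\ext^2(E,E)_0$ is nonzero (equivalently that the Kuranishi map is, to leading order, nondegenerate onto a codimension-$1$ subspace). I would extract this from the explicit description of extensions: deformations of $E$ correspond, via the sequence $0\to\O_X\to E\to\ccI_{x_0}(F_A)\to 0$, to deformations of the point $x_0$ inside $X$ together with the extension class, and the locus where such a deformation stays in the family $M_X(2,F_A,1)$ is cut out by the condition $x\in F_B$; since $F_B\subset X$ has an ordinary node at $x_0$, the induced local model of the moduli space is that node, hence reduced. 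Once local reducedness is in hand, purity of dimension and connectedness are immediate from the preceding discussion, completing the proof that $M_X(2,F_A,1)$ is a connected reduced curve.
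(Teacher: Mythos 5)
Your treatment of connectedness and of the smooth locus matches the paper: bijectivity on closed points from Corollary~\ref{bijection} plus vanishing of $\ext^2(E,E)$ away from the nodes of $F_B$ (Corollary~\ref{E_izo_EK}) gives a connected, generically smooth curve of dimension $1$. The problem is at the finitely many points where $x_0$ is a node of $F_B$, and there your argument has a genuine gap. Neither of the two justifications you offer is a proof. First, the existence of a bijective morphism from the reduced curve $F_B$ onto a generically reduced target does not rule out embedded points: the quotient map $k[x,y]/(xy,y^2)\to k[x]$ induces a morphism $\AA^1\to \Spec k[x,y]/(xy,y^2)$ that is bijective on closed points, yet the target is non-reduced, generically reduced, and has a $2$-dimensional tangent space at the origin --- exactly the situation you cannot exclude at a node. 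Second, the claim that ``the induced local model of the moduli space is the node of $F_B$'' is essentially the statement of Theorem~\ref{main2}; in the paper that isomorphism is deduced \emph{from} reducedness of $M_X(2,F_A,1)$ (two mutually inverse bijective morphisms between reduced schemes), so invoking it here is circular. Making your Kuranishi-map sketch rigorous would require actually computing that the quadratic part of the obstruction map $\ext^1(E,E)\to\ext^2(E,E)\simeq k$ is a nondegenerate rank-$2$ quadratic form, which you do not do.

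The paper closes this gap by a different, purely numerical device: since $\dim\ext^1(E,E)-\dim\ext^2(E,E)=1$ equals the actual dimension at \emph{every} point, \cite[Theorem 4.5.8]{HL1} shows that $M_X(2,F_A,1)$ is a locally complete intersection, hence Cohen--Macaulay, hence without embedded points; a generically reduced scheme with no embedded points is reduced. You should replace your analysis at the nodes with this argument (or supply the explicit Kuranishi computation). The rest of your write-up, including the remark that $\ext^i(E,E)_0=\ext^i(E,E)$ here because $h^1(\O_X)=h^2(\O_X)=0$, is fine.
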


\begin{proof}
By Lemma \ref{bijection} the morphism $F_B\rightarrow
M_X(2,F_A,1)$ constructed above is bijective on closed points.

If $[E]\in M_X(2,F_A,1)$ corresponds to a smooth point of $F_B$
then by Corollary \ref{E_izo_EK} we have $E\not\simeq E(K)$ and
therefore $\ext^2(E,E)=0$. By \cite[Corollary 5.1.2]{tfs} $[E]$ is
a smooth point and the dimension of $M_X(2,F_A,1)$ at this point
is equal $\dim \ext^1(E,E)=1+c_2(E\otimes
E^{\vee})-4\chi(\O_X)=1$. Therefore $M_X(2,F_A,1)$ is connected
and reduced at every generic point and it has a finite number of
singular points corresponding to singularities of $F_B$.

Note that the expected dimension of the moduli space $
M_X(2,F_A,1)$ at any point $[E]$ is equal to $\dim \ext^1(E,E)-
\dim \ext^2(E,E)=1$. Therefore by \cite[Theorem 4.5.8]{HL1} the
moduli space $M_X(2,F_A,1)$ is a locally complete intersection.
Since $M_X(2,F_A,1)$ is reduced at every generic point it is
reduced everywhere.
\end{proof}

We can also construct a morphism in the opposite direction.

By \cite[Theorem 4.6.5]{HL1} the moduli space $M_X(2,F_A,1)$  is a
fine moduli space. Indeed, the chosen polarisation excludes the
existence of strictly semistable sheaves and if $[E]\in
M_X(2,F_A,1)$ then $\chi(E)=1$ (so we can take $B=\O_X$  in the
above mentioned theorem). Let $\F$ be a universal family on
$M_X(2,F_A,1)\times X$ and let $p_1,p_2$ denote projections on the
first and the second factor, respectively. For every closed point
$[E]\in M_X(2,F_A,1)$ there exists an extension
$$0\rightarrow\O_X\rightarrow E\rightarrow\ccI_{x}\otimes\O_X(F_A)\rightarrow 0$$
for some $x\in F_B$. Hence the long exact sequence of cohomology
gives $H^0(\O_X)\simeq H^0(E)$. Moreover, we have already proved
that $h^2(E)=0$  so equality $\chi(E)=1$ gives us vanishing of
$H^1(E)$. The following theorem shows that ${p_1}_*\F$ is an
invertible sheaf on $M_X(2,F_A,1)$:

\begin{Theorem} \emph{(\cite[Theorem 7.9.9]{ega})}
Let $Y$ be a locally noetherian scheme, $f:X\rightarrow Y$ a
proper morphism, $\F$ a sheaf of $\O_X-$modules flat over $Y$.
Assume that there exists $i_0\in \ZZ$ that
$h^i(f^{-1}(y),\F\otimes_{\O_u}k(y))=0$ for every $i\neq i_0$ and
every $y\in Y$. Then $R^{i_0}f_*\F$ is locally free at $y$ and its
rank is equal to $h^{i_0}(f^{-1}(y),\F\otimes_{\O_u}k(y))$.
\end{Theorem}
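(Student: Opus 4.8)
The statement is a special case of Grothendieck's cohomology-and-base-change machinery, so the plan is to reduce it to linear algebra over a noetherian ring. Since the conclusion is local on $Y$, I would first reduce to the case $Y=\Spec A$ with $A$ noetherian. Because $f$ is proper and $\F$ is flat over $Y$, there is a bounded complex $L^\bullet$ of finitely generated free $A$-modules together with isomorphisms $H^i(L^\bullet\otimes_A M)\cong H^i(X,\F\otimes_{\O_Y}\widetilde M)$, functorial in the $A$-module $M$; for $M=k(y)$ the right-hand side is $h^i(f^{-1}(y),\F\otimes k(y))$, while for $M=A$ it is $\Gamma(Y,R^if_*\F)$, so that $R^if_*\F\cong\widetilde{H^i(L^\bullet)}$ as quasi-coherent sheaves (this is the \emph{Grothendieck complex}; see Mumford, \emph{Abelian varieties}, \S 5, or Hartshorne, \emph{Algebraic geometry}, III.12, or \cite{ega}). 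Producing $L^\bullet$ is the only substantial input; everything afterwards is formal.

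Given $L^\bullet$, I would use the hypothesis to compress it into a single module concentrated in degree $i_0$. Suppose $L^j=0$ for $j\notin[a,b]$. If $b>i_0$, then for every $y$ we have $H^b(L^\bullet\otimes k(y))=\coker(d^{b-1}\otimes k(y))=0$, so $d^{b-1}\colon L^{b-1}\to L^b$ is surjective by Nakayama; since $L^b$ is free this surjection splits and $L^{b-1}\cong\ker d^{b-1}\oplus L^b$ with $\ker d^{b-1}$ finitely generated projective. Replacing the two top terms by $\ker d^{b-1}$ in degree $b-1$ and $0$ in degree $b$ produces a bounded complex of finitely generated projective modules which is quasi-isomorphic to $L^\bullet$, hence homotopy equivalent to it (a quasi-isomorphism between bounded-below complexes of projectives is a homotopy equivalence), hence still computes $H^\bullet(X,\F\otimes\widetilde M)$ after $\otimes_A M$ for every $M$. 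Iterating downward reduces to $b=i_0$.

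Symmetrically, if $a<i_0$, then $H^a(L^\bullet\otimes k(y))=\ker(d^a\otimes k(y))=0$ for all $y$, so the finitely presented module $\coker d^a$ has fibre dimension $\dim_{k(y)}(\coker d^a)\otimes k(y)=\rk L^{a+1}-\rk L^a$ independent of $y$; a finitely presented module of locally constant fibre dimension over a noetherian ring is finite locally free, so $\coker d^a$ is flat and $0\to L^a\to L^{a+1}\to\coker d^a\to 0$ stays exact after any $\otimes_A M$. Replacing $L^a,L^{a+1}$ by $0$ and $\coker d^a$ once more yields a homotopy-equivalent complex of finitely generated projectives; iterating upward reduces to $a=i_0$. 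Now $L^\bullet$ is homotopy equivalent to a single finitely generated projective $A$-module $P$ placed in degree $i_0$. Taking $M=A$ gives $R^if_*\F=0$ for $i\neq i_0$ and $R^{i_0}f_*\F\cong\widetilde P$, which is locally free; taking $M=k(y)$ gives $h^{i_0}(f^{-1}(y),\F\otimes k(y))=\dim_{k(y)}P\otimes_A k(y)$, which is precisely the rank of $\widetilde P$ at $y$. This proves the theorem.

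The one genuinely delicate step is the construction of the Grothendieck complex $L^\bullet$ with base change against \emph{arbitrary} $A$-modules, not merely $A$-algebras: this rests on the flatness of $\F$ over $Y$ together with a truncation of the \v{C}ech complex of $\F$ with respect to a finite affine cover of $X$; the compression argument afterwards is elementary. If one wishes to bypass $L^\bullet$ altogether, one can argue directly with the base-change comparison maps $\varphi^i(y)\colon(R^if_*\F)\otimes k(y)\to h^i(f^{-1}(y),\F\otimes k(y))$, using the semicontinuity and exchange theorems and descending from the range of $i$ where $R^if_*\F$ vanishes for cohomological-dimension reasons; this is essentially the route taken in \cite{ega}.
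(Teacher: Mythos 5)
The paper does not prove this statement at all --- it is quoted verbatim as \cite[Theorem 7.9.9]{ega} and used as a black box to conclude that ${p_1}_*\F$ is invertible --- so there is no internal proof to compare against, only EGA's. Your argument is correct and complete modulo the one input you explicitly flag: the existence, for $f$ proper and $\F$ flat over the noetherian base $A$, of a bounded complex $L^\bullet$ of finite free $A$-modules computing $H^i(X,\F\otimes\widetilde M)$ universally in $M$. For a general proper (not necessarily projective) $f$ this requires the finiteness theorem of EGA III 3.2.1 before the standard replacement of the \v{C}ech complex by a finite free one, but granting that, your two compression steps are sound: the downward step is correct Nakayama plus splitting off a free summand (note the resulting complex consists of projectives rather than free modules, which you correctly accommodate), and the upward step correctly uses that a finitely presented module with locally constant fibre dimension is locally free, whence $\coker d^a$ is flat, $d^a$ is injective with locally split image, and the truncation commutes with every $\otimes_A M$. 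This is Mumford's route (\emph{Abelian varieties}, \S 5); EGA's own proof instead runs through the exchange property and the comparison maps $\varphi^i(y)$, as you note in your closing paragraph. The Grothendieck-complex route buys a cleaner, essentially linear-algebraic argument and simultaneously yields $R^if_*\F=0$ for $i\neq i_0$ (a slightly stronger conclusion than the statement asks for); EGA's route is the one that generalizes to statements about individual $i$ without a global concentration hypothesis. Either way, the result as used in the paper is fully justified by your argument.
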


Moreover, $\hom({p_1}^*{p_1}_*\F,\F)=\hom({p_1}_*\F,{p_1}_*\F)$ so
we can consider the map
\begin{equation}
\label{morfizm_snopow} {p_1}^*{p_1}_*\F\rightarrow\F
\end{equation}
associated with $\id_{{p_1}_*\F}$. If we look at
(\ref{morfizm_snopow}) on fibres of $p_1$, we recognize the
extension from Lemma (\ref{wyznaczanie}). So the cokernel of
(\ref{morfizm_snopow}) is isomorphic to $\ccI_C\otimes
p_2^*\O_X(F_A)\otimes p_1^*\L'$ for some invertible sheaf $\L'$ on
$M_X(2,F_A,1)$ and a curve $C\subset M_X(2,F_A,1)\times X$. Note
that by restricting $C$ to $[E]\times X$ we get a point $x\in X$
determining $E$. The sheaf $\ccI_C$ gives us a sheaf $\O_C$ which
can be treated as family of zero-dimensional subschemes of $X$
parameterized by $M_X(2,F_A,1)$. This gives a morphism
$M_X(2,F_A,1)\rightarrow \Hilb^1(X)\simeq X$ which factors through
$F_B$.

\begin{Theorem} \label{main2}
Let us fix a polarization $H$ satisfying the conditions from Lemma
\ref{polarization}. Then the moduli space $M_X(2,F_A,1)$ of rank
$2$ Gieseker $H$-semistable sheaves with first Chern class $F_A$
and second Chern class $1$ is isomorphic to $F_B$.
\end{Theorem}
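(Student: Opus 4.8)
The strategy is to assemble the two morphisms $F_B \to M_X(2,F_A,1)$ and $M_X(2,F_A,1) \to F_B$ already constructed in the preceding subsections and to check that they are mutually inverse. Both maps are defined via universal/flat families: the family $\E$ on $F_B \times X$ gives $\varphi\colon F_B \to M_X(2,F_A,1)$, while the cokernel of the evaluation map $p_1^*{p_1}_*\F \to \F$ produces a family of length-one subschemes, hence a map $M_X(2,F_A,1) \to \Hilb^1(X) \simeq X$ that by Lemma \ref{wyznaczanie} lands in $F_B$; call this $\psi$. The first thing I would record is that both $\varphi$ and $\psi$ are morphisms of schemes (this follows from the universal property of the fine moduli space and of the Hilbert scheme, together with flatness of $\E$ and of the cokernel sheaf over its base — the cokernel is flat because its fibrewise Hilbert polynomial is constant).

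\textbf{Identifying the composites.} The key point is the identification $\psi \circ \varphi = \id_{F_B}$ on the level of schemes. On closed points this is immediate: for $x \in F_B$, the sheaf $\E_{|\{x\}\times X}$ is the non-split extension determined by $x$, and by Corollary \ref{bijection} the point it gets sent back to by $\psi$ is exactly $x$. To upgrade this to an equality of morphisms, I would compare the two families of length-one subschemes on $F_B \times X$: on one side the graph $\Gamma$ used to build $\E$, on the other side the curve $C$ extracted from the cokernel of ${p_1}^*{p_1}_*\varphi^*\F \to \varphi^*\F$. Pulling back the construction of $\psi$ along $\varphi$, the universal evaluation map becomes (up to the twist by a line bundle on $F_B$, which does not affect the support) the map $\O_{F_B\times X} \to \E$ from the defining sequence of $\E$, whose cokernel is $\ccI_\Gamma \otimes \pi_2^*\O_X(F_A)\otimes \pi_1^*\L$. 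Hence the associated subscheme of $X$ is $\Gamma$ itself, and the induced map $F_B \to \Hilb^1(X) \simeq X$ is the inclusion $F_B \hookrightarrow X$; therefore $\psi\circ\varphi$ is the identity on $F_B$.

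\textbf{Conclusion.} Since $\psi\circ\varphi = \id_{F_B}$, the morphism $\varphi$ is a closed immersion and $\psi$ is a retraction onto its image. On the other hand, by Corollary \ref{bijection} the map $\varphi$ is bijective on closed points, so $\varphi$ is a bijective closed immersion, hence a homeomorphism; and by the previous Corollary the scheme $M_X(2,F_A,1)$ is reduced. A bijective closed immersion onto a reduced scheme from a scheme is an isomorphism: $\varphi^{-1}$ is given by $\psi$, which we already know is a morphism, and $\varphi\circ\psi$ agrees with the identity on the dense set of closed points of the reduced scheme $M_X(2,F_A,1)$, hence equals $\id_{M_X(2,F_A,1)}$. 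This proves $M_X(2,F_A,1) \simeq F_B$.

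\textbf{Main obstacle.} The delicate part is not the point-set bijection (that is Corollary \ref{bijection}) nor the reducedness (already proved), but the scheme-theoretic compatibility: one must verify carefully that pulling back the universal evaluation map $p_1^*{p_1}_*\F \to \F$ along $\varphi$ recovers the section $\O_{F_B\times X}\to\E$ up to an invertible twist, i.e. that the construction of $\psi$ is ``inverse'' to that of $\varphi$ at the level of families and not merely fibrewise. This amounts to tracking the base-change behaviour of ${p_1}_*$ (legitimate by the cited form of \cite[Theorem 7.9.9]{ega}, since $h^0$ is locally constant $=1$ and $h^1$ vanishes fibrewise) and checking that the resulting map $\varphi^*{p_1}^*{p_1}_*\F \to \varphi^*\F$ is, after twisting by $\L$, exactly the canonical inclusion into $\E$ — which holds because both are, fibrewise over $F_B$, the unique-up-to-scalar nonzero section, and $F_B$ is reduced.
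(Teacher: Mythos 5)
Your proposal is correct and takes essentially the same approach as the paper: compose the two morphisms already constructed and use reducedness of both schemes to upgrade agreement on closed points to equality of morphisms. The paper's own proof is just the two-line observation that the composites are the identity on closed points and both $F_B$ and $M_X(2,F_A,1)$ are reduced; your additional family-level verification that $\psi\circ\varphi$ is the identity is a valid (if strictly unnecessary, given reducedness of $F_B$) refinement of the same argument.
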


\begin{proof}
We have already constructed  morphisms $M_X(2,F_A,1)\rightarrow
F_B$ and $F_B\rightarrow M_X(2,F_A,1)$ which give identity on
closed points when they are composed. Since both schemes are
reduced these morphisms are isomorphisms.
\end{proof}

\section{Moduli spaces of Gieseker semistable sheaves of even rank}

In this section we prove Theorem \ref{Main}. First we prove some
simple results about lattices. Then we recall some results on the
Mukai lattice for an Enriques surface and we prove some lemmas
concerning this lattice. Finally we use these results and
Yoshioka's method to prove a refinement of Theorem \ref{Main} (see
Theorem \ref{Main2}).

\subsection{Some simple results on lattices} \label{lattices}

Let $L$ be a finitely generated free $\ZZ$-module. An element
$x\in L$ is called \emph{primitive} if the quotient module $L/\ZZ
x$ is torsion free. A \emph{lattice} is a pair consisting of a
finitely generated free $\ZZ$-module and an integral  bilinear (in
our case also symmetric) form $\langle \cdot, \cdot \rangle$.

In the following $-E_8$ denotes lattice $\ZZ ^8$ with canonical
basis $\{e_1,\dots, e_8\}$ whose intersection matrix $(\langle
e_i, e_j\rangle)$ is negative of the Cartan matrix of the root
system $E_8$.

\begin{Lemma}
\label{primitive_element} Let $L$ be a finitely generated free
$\ZZ$-module of rank $\rk L=n>1$. Let $r$ be positive integer and
let $x$ be an element of $L$. Let us set $l=\gcd(r,x)$. Then there
exists $\xi\in L$ such that $\frac{x+r\xi}{l}$ is a primitive
element in $L$. Moreover, if there exists a bilinear form $\langle
\cdot, \cdot \rangle$ such that $(L, \langle  \cdot, \cdot \rangle
)$ is isometric to $-E_8$ then for an arbitrary number $M$ we can
choose $\xi$ such that $2\langle x, \xi \rangle+r \langle \xi ,
\xi \rangle <M$.
\end{Lemma}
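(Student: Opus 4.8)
The plan is to handle the two assertions in sequence, building the primitive element first and then adjusting it inside the $-E_8$ lattice. For the first part, write $l = \gcd(r,x)$, meaning $l$ divides $r$ and $l$ divides $x$ in the sense that $x = l x'$ with $x' \in L$ and $\gcd(r/l, x') $ makes sense after we also divide $r$; more carefully, set $r = l s$ and observe $\gcd(s, \text{content}(x'))$ need not be $1$, so some genuine work is needed. The clean approach is: since $\rk L = n > 1$, pick a basis $e_1, \dots, e_n$ and write $x/l$ in a convenient form. We want $\xi$ so that $(x + r\xi)/l = x' + s\xi$ is primitive. Choose $\xi = e_1$ if the first coordinate $a_1$ of $x'$ satisfies $\gcd(a_1 + s, a_2, \dots, a_n) = 1$; in general one shows that for a suitable integer combination $\xi = \sum c_i e_i$ the coordinates of $x' + s\xi$ become globally coprime. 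The key elementary fact is that for any integers $a_1, \dots, a_n$ (not all forced) and any modulus $s$, one can find $c_1, \dots, c_n$ with $\gcd(a_1 + s c_1, \dots, a_n + s c_n) = 1$: indeed, working prime by prime among the finitely many primes $p \mid s$, use that $n > 1$ and the Chinese Remainder Theorem to arrange that for each such $p$ at least one adjusted coordinate is a unit mod $p$ — this is where the hypothesis $n > 1$ is essential, since for a prime $p$ dividing both $s$ and every $a_i$ we need a "free" coordinate direction to perturb. Primes not dividing $s$ are harmless since the $a_i$ already have content coprime to $s$ after dividing by $l = \gcd(r, x)$.

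For the second part, suppose $(L, \langle\cdot,\cdot\rangle) \cong -E_8$, so the form is negative definite, and we are given a target bound $M$. The point is that the congruence conditions defining an admissible $\xi$ (namely $\xi \equiv \xi_0 \pmod{sL}$ for the $\xi_0$ found above, up to the finitely many primes $p \mid s$ — actually we only need $\xi$ in a fixed coset modulo $s$ in each relevant coordinate, hence $\xi$ ranges over a full-rank sublattice coset) still leave infinitely many choices, and we must steer the quadratic quantity $Q(\xi) := 2\langle x, \xi\rangle + r\langle \xi, \xi\rangle$ below $M$. Since the form is negative definite, $r\langle\xi,\xi\rangle \to -\infty$ as $\|\xi\| \to \infty$ along any direction, while $2\langle x,\xi\rangle$ grows only linearly; so $Q(\xi) \to -\infty$. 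Concretely, replace $\xi_0$ by $\xi_0 + s t \eta$ for a fixed $\eta$ with $\langle\eta,\eta\rangle < 0$ (any basis vector of $-E_8$ works, as $\langle e_i, e_i\rangle = -2$) and $t \in \ZZ_{>0}$: then $Q(\xi_0 + st\eta) = Q(\xi_0) + 2st\langle x + r\xi_0, \eta\rangle + r s^2 t^2 \langle \eta,\eta\rangle$, a downward parabola in $t$, so for $t$ large enough it drops below $M$. One then checks $\xi_0 + st\eta$ still satisfies the coprimality requirement (it does: adding a multiple of $s$ to any coordinate does not change coordinates modulo $s$, hence does not change the gcd computed prime-by-prime at primes dividing $s$, and the content stays coprime to primes not dividing $s$).

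The main obstacle I expect is the first part — precisely pinning down the coprimality adjustment and verifying that $n > 1$ genuinely suffices — rather than the estimate in the second part, which is a routine "negative definite form dominates a linear term" argument once the sublattice-coset structure of admissible $\xi$ is noted. A secondary care point is bookkeeping around $l = \gcd(r,x)$: one should state at the outset what $\gcd(r,x)$ means (the largest $l$ such that $l \mid r$ and $x \in lL$), confirm that $x/l$ has content coprime to $r/l$ is \emph{not} automatic, and so carry the primes dividing $s = r/l$ through the argument honestly. I would also remark that the construction is uniform: the $\xi$ producing primitivity can always be modified as above without destroying primitivity, which is what makes the "moreover" clause compatible with the first assertion.
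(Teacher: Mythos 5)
Your second assertion (the $-E_8$ bound) is fine and is essentially the paper's argument: the form is negative definite, so the quadratic term dominates and $2\langle x,\xi\rangle+r\langle\xi,\xi\rangle$ can be pushed below $M$ along a ray without destroying primitivity. The gap is in the first part. Write $x=lx'$, $s=r/l$, $x'=\sum a_ie_i$. The definition $l=\gcd(r,x)$ gives $\gcd\bigl(s,\gcd(a_1,\dots,a_n)\bigr)=1$, and this already guarantees that \emph{no} prime $p\mid s$ can divide all the $a_i+sc_i$: for such $p$ the shift $sc_i$ vanishes mod $p$, so nothing can be ``arranged'' by CRT there, and nothing needs to be (in particular, your remark that for $p\mid s$ dividing every $a_i$ one perturbs in a free direction is doubly off: that situation is excluded by the coprimality above, and if it occurred no perturbation by multiples of $s$ could help). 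The primes that actually threaten primitivity are exactly those \emph{not} dividing $s$: for $p\nmid s$ the residues $a_i+sc_i\bmod p$ move freely with the $c_i$, and a bad choice can make such a $p$ divide every coordinate --- e.g.\ $n=2$, $s=2$, $a_1=a_2=1$, $c_1=c_2=1$ gives $\gcd(3,3)=3$. Since there are infinitely many primes $p\nmid s$, a prime-by-prime CRT over the divisors of $s$ cannot reach them, and your sentence ``primes not dividing $s$ are harmless since the $a_i$ have content coprime to $s$'' is a non sequitur: that coprimality controls the primes dividing $s$, not the others.

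The standard repair --- and the paper's actual proof --- is to perturb a single coordinate while freezing a nonzero one. Assuming $a_1\neq 0$ (the case $x=0$ is treated separately by taking $\xi$ any primitive vector), set $\xi=kbe_2$ where $k$ is the product of the primes dividing $a_1$ but not $a_2$ and $\gcd(b,a_1)=1$. Then any prime dividing all coordinates of $x'+s\xi=(a_1,\,a_2+skb,\,a_3,\dots,a_n)$ must divide the fixed nonzero integer $a_1$, which reduces the problem to finitely many primes, each excluded by the choice of $k$ and $b$ together with $\gcd(s,\gcd(a_i))=1$. This one-coordinate device also makes the ``moreover'' clause immediate, since $b$ stays a free parameter ranging over an infinite set. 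Your proposal needs this (or an equivalent reduction to finitely many primes) inserted; as written the first part does not go through.
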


\begin{proof}
Let $\{e_1,\ldots,e_n\}$ be a basis for $L$. If $x=0$ then as
$\xi$ we can take an arbitrary primitive element in $L$. If $x\ne
0 $ then we can assume that $\frac{x}{l}=\sum a_ie_i$ with
$a_1\neq 0$. Let $k$ be the product of all prime numbers which
divide $a_1$ but do not divide $a_2$. We claim that for all $b$
such that $\gcd(b,a_1)=1$ the element
$y:=\frac{r}{l}kbe_2+\frac{x}{l}$ is primitive. In our chosen
basis $y$ has coordinates $(a_1,\frac{r}{l}kb+a_2,\ldots, a_n)$.
Let $p$ be a prime divisor of $a_1$. Then either $p|a_m$ for all
$m$ or there exists $m$ such that $p\nmid  a_m$. In the first case
$p\nmid \frac{r}{l}$ because otherwise $\gcd(x,r)>l$. Then $p$
does not divide neither $k$ nor $b$. Therefore
$p\nmid\gcd(a_1,\frac{r}{l}kb+a_2)$ and hence $p\nmid y$.

Now consider the case when there exists $m$ such that $p\nmid
a_m$. If $m\neq 2$ then $p\nmid \gcd(a_1,a_m)$. If $m=2$ then
$p|k$ and $p\nmid\gcd(a_1,\frac{r}{l}kb+a_2)$.

To finish the lemma for lattice $-E_8$ we may assume that $\langle
e_i, e_i\rangle=-2$ and $\langle e_2,e_3\rangle=0$. If $x=0$ then
we take $\xi=pe_2+qe_3$ for prime numbers $p,q\gg 0$. In the other
case it is enough to notice that $2\langle x,kbe_2\rangle+r\langle
kbe_2,kbe_2\rangle$ is a quadratic polynomial in $b$ with negative
leading coefficient so for $b\gg 0$ it less than $M$.
\end{proof}

\medskip

For the convenience of the reader we include a proof of the
following well known lemma.

\begin{Lemma} \label{known}
Let $(L,\langle\,,\,\rangle)$ be a unimodular lattice of rank $n$
and $y\in L$ be a primitive element. Then for every $m\in\ZZ$
there exists $\eta\in L$ such that $\langle \eta,y\rangle=m$.
\end{Lemma}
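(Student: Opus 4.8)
The plan is to show that the $\ZZ$-linear functional $\phi\colon L\to\ZZ$ defined by $\phi(\eta)=\langle\eta,y\rangle$ is surjective; granting this, for any $m\in\ZZ$ one simply chooses $\eta\in L$ with $\phi(\eta)=m$. The key point is to use unimodularity in the strong form: the correlation map $c\colon L\to L^{*}:=\hom(L,\ZZ)$, $v\mapsto\langle\,\cdot\,,v\rangle$, is an isomorphism of $\ZZ$-modules (in particular $c$ is injective, i.e.\ the form is nondegenerate), not merely that some Gram determinant is $\pm1$.

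First I would record that $\phi=c(y)$ and that $\phi\neq0$: a primitive element $y$ is nonzero and $c$ is injective. Hence $\phi(L)$ is a nonzero subgroup of $\ZZ$, so $\phi(L)=d\ZZ$ for a unique integer $d\geq1$. The goal is to prove $d=1$.

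Since $\phi$ takes all its values in $d\ZZ$, the assignment $\psi:=\tfrac1d\phi$ is a well-defined element of $L^{*}$. By surjectivity of $c$ there is $y'\in L$ with $c(y')=\psi$, that is $\langle\eta,y'\rangle=\tfrac1d\langle\eta,y\rangle$ for all $\eta\in L$; multiplying by $d$ and using injectivity of $c$ gives $y=dy'$. If $d\geq2$, then $y'\neq0$ (because $y\neq0$), and the class of $y'$ in $L/\ZZ y$ is a nonzero element annihilated by $d$ — indeed $d\bar y'=\overline{dy'}=\bar y=0$, and $y'\notin\ZZ y=\ZZ(dy')$ since $(1-kd)y'=0$ would force $y'=0$. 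This contradicts the primitivity of $y$. Therefore $d=1$, $\phi$ is surjective, and the lemma follows.

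I do not expect a genuine obstacle; the only thing to be careful about is invoking unimodularity correctly as the statement ``$L\to L^{*}$ is an isomorphism'' and keeping straight the trivial reduction from surjectivity of $\phi$ to the stated claim. An alternative, more hands-on route would be to use that $y$ primitive is equivalent to $y$ extending to a basis $y=e_1,\dots,e_n$ of $L$, and then expand the Gram determinant $\det(\langle e_i,e_j\rangle)=\pm1$ along its first column $(\langle e_i,y\rangle)_i$, exhibiting $\pm1$ as an explicit integral combination $\sum_i C_{i1}\langle e_i,y\rangle$ of the $\langle e_i,y\rangle$; then $\eta=\pm\sum_i C_{i1}e_i$ satisfies $\langle\eta,y\rangle=1$, and $m\eta$ handles general $m$.
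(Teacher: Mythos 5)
Your argument is correct, and it takes a genuinely different route from the paper's. The paper uses primitivity directly to extend $y$ to a basis: since $L/\ZZ y$ is free, one can choose $\alpha_1,\dots,\alpha_{n-1},\alpha_n=y$ forming a basis of $L$, and then cofactor expansion of the Gram determinant $\det(\langle\alpha_i,\alpha_j\rangle)=\pm1$ along the last column exhibits $1$ as an integral combination $\sum a_i\langle\alpha_i,y\rangle$, so $\eta=\sum a_i\alpha_i$ works. You instead read unimodularity as the statement that the correlation map $c\colon L\to L^{*}$ is an isomorphism, identify $\phi(L)=d\ZZ$, pull $\phi/d$ back to $y'$ with $y=dy'$, and rule out $d\ge 2$ by producing a nonzero $d$-torsion class in $L/\ZZ y$; this is coordinate-free, cleanly separates where unimodularity and primitivity are each used, and is a sound argument (the step $y'\notin\ZZ y$ via $(1-kd)y'=0$ is fine since $L$ is torsion free). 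The paper's proof is more elementary and computational, and is in fact essentially the ``alternative hands-on route'' you sketch at the end. The only shared blemish is that both arguments tacitly assume $y\ne 0$ (under the paper's literal definition $0$ is primitive, and the statement fails for $y=0$, $m\ne 0$), which is the standard convention and harmless here.
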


\begin{proof}
It is enough to prove the lemma for $m=1$. Let $M$ be the
sublattice of $L$ generated by $y$. Then $L/M$ is a free
$\ZZ$-module with basis $[\alpha_1],\ldots,[\alpha_{n-1}]$. Then
$\alpha_1,\ldots,\alpha_{n-1},\alpha_n=y$ is a basis for $L$. The
determinant of the matrix $(\langle \alpha_i,\alpha_j\rangle)$ is
equal $\pm 1$ so
$$\gcd(\langle\alpha_1,\alpha_n\rangle,\ldots,\langle\alpha_i,\alpha_n\rangle,\ldots,\langle\alpha_n,\alpha_n\rangle)=1.$$
Therefore there exist integers $a_i$ such that $\sum
a_i\langle\alpha_i,\alpha_n\rangle=1$ and we can take $\eta=\sum
a_i\alpha_i$.
\end{proof}

\subsection{Mukai's lattice of an Enriques surface}

Let $X$ be a complex Enriques surface and let $K(X)$ be the
Grothendieck group of $X$. Any class in $K(X)$ has well defined
Chern classes. The \emph{Mukai vector} $v(x)$ of a class $x\in
K(X)$ is defined as the following element of $H^{2*}(X,\QQ)$
$$v(x):=\ch(x)\sqrt{{\td}_X}=rk(x)+c_1(x)+\left(\frac{\rk(x)}{2}\varrho_X+{\ch}_2(x)\right)\in H^{2*}(X,\QQ),$$
where $\rho _X$ is the fundamental class of $X$ (i.e., such class
in $H^4(X,\QQ)$ that $\int _X \rho _X=1$). The induced map $v:
K(X)\to H^{2*}(X, \QQ)$ is additive and it factors through the
surjective map $K(X) \to \ZZ \oplus \NS (X) \oplus \ZZ$ given by
$x\to (\rk x, c_1(x), \chi (x))$. This follows from equality $\chi
(x)=\int _X \ch _2 (x)+\rk (x)$ obtained from the Riemann--Roch
theorem. Therefore $v (K (X))=\ZZ \oplus H^2(X,\ZZ)_f\oplus
\frac{1}{2}\ZZ \rho_X\subset H^{2*}(X,\QQ)$, where $H^2(X,\ZZ)_f$
is the torsion free quotient of $H^2(X,\ZZ)$.

On $H^{2*}(X,\QQ)$ we introduce the \emph{Mukai pairing} by
$\langle x,y\rangle:=-\int_X x^\vee \wedge y$. Then the lattice
$(v(K(X)), \langle\cdot,\cdot\rangle)$ is isometric to
$$ \left(\begin{array}{cc}1&0\\0&-1
\end{array}\right)\oplus\left(\begin{array}{cc}0&1\\1&0
\end{array}\right)\oplus -E_8.$$
Note that the Mukai pairing induces on $H^2 (X, \ZZ)_f$
intersection form $(\cdot, \cdot)$.

\begin{Definition}
An element of $v(K(X))$ is called a \emph{Mukai vector}. A Mukai
vector $v$ is \emph{primitive}, if $v$ is primitive as an element
of the lattice $v(K(X))$.
\end{Definition}

\begin{Remark}
\label{uwaga_na_temat_prymitywnosci} A Mukai vector
$v=2+c_1+t\varrho_X$ is not primitive if and only if $c_1$ is
divisible by $2$ and $t$ is odd. Indeed, $v$ can be divisible only
by $2$ and if it is divisible then we have equality
$2+c_1+t\varrho_X=2(1+c_1'+\frac{1}{2}\rho_X
+\frac{1}{2}(c_1')^2-c_2'$) for some $c_1'$ and $c_2'$. This is
equivalent to $c_1=2c_1'$ and $t=1+(c_1')^2-2c_2'$.
\end{Remark}

\medskip

Let us note that for a divisor $D$ we have $v(x\otimes [\O_X
(D)])=v(x)\exp(D)$, where $\exp(D)=1+D+\frac{1}{2}D^2\varrho_X$.
Multiplication by $\exp(D)$ is an isometry of $(v(K(X)),
\langle\,,\,\rangle)$.

In case of Enriques surfaces the torsion free part of the Picard
group is isomorphic to $H^2(X,\ZZ)_f$. We also know that the
lattice $(H^2(X,\ZZ)_f, (\cdot, \cdot))$ is isometric to an
orthogonal direct sum $\HH \perp -E_8$, where $\HH$ is a
hyperbolic plane. The canonical basis of $\HH$ is denoted by
$\{\sigma , f\}$, so we have $\sigma ^2=f^2=0$ and $(\sigma ,
f)=1$.

\medskip

We will also use the following lemma which similarly to Remark
\ref{uwaga_na_temat_prymitywnosci} concerns divisors of $r,c_1$
and $s$ in a primitive Mukai vector.

\begin{Lemma} \label{Lemma_na_temat_gcd}
Let $v=r+c_1-s/2\rho_X$ be a primitive Mukai vector. Then
$\gcd(r,c_1,s)$ equals $1$ or $2$. Moreover:
\begin{itemize}
\item if $\gcd(r,c_1,s)=1$ then either $r$ or $c_1$ is not divisible by $2$,
\item if $\gcd(r,c_1,s)=2$ then $c_2$ must be odd and $r+s\equiv 2\bmod
4$.
\end{itemize}
\end{Lemma}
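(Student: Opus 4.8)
The plan is to argue entirely inside the Mukai lattice using the description $v(K(X))=\ZZ\oplus H^2(X,\ZZ)_f\oplus\frac{1}{2}\ZZ\rho_X$ and the characterization of non-primitivity from Remark \ref{uwaga_na_temat_prymitywnosci}. Write $v=r+c_1-\frac{s}{2}\rho_X$; since $v$ is a genuine Mukai vector, $s$ has the same parity as $r$ (this is exactly the integrality constraint $\chi(x)=\int_X\ch_2(x)+\rk(x)$, rewritten: $s=\rk(x)-2\chi(x)$ forces $s\equiv r\bmod 2$). First I would show $d:=\gcd(r,c_1,s)\le 2$. Suppose $d\ge 3$; then in particular an odd prime $p$ divides $r$, $c_1$ and $s$. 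Consider $w:=\frac{1}{p}v=\frac{r}{p}+\frac{c_1}{p}-\frac{s}{2p}\rho_X$; its components lie in $\ZZ$, $H^2(X,\ZZ)_f$ (because $p\mid c_1$ in the torsion-free lattice), and $\frac{1}{2}\ZZ\rho_X$ respectively (because $p\mid s$), so $w\in v(K(X))$ and $v=pw$ contradicts primitivity of $v$. Hence only the prime $2$ can divide all three, i.e. $d\in\{1,2\}$.

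Next I would treat the two cases. If $d=1$: suppose for contradiction that $2\mid r$ and $2\mid c_1$. Then $2\nmid s$ (else $d$ would be even), so $s$ is odd; but $s\equiv r\bmod 2$ and $r$ is even — contradiction. Hence at least one of $r,c_1$ is not divisible by $2$, which is the first bullet. If $d=2$: then $2\mid r$, $2\mid c_1$ and $2\mid s$. Write $r=2r'$, $c_1=2c_1'$, $s=2s'$. Primitivity of $v$ means $v$ is not divisible by $2$ in $v(K(X))$; since the only possible divisor is $2$, this is the statement that $\frac{1}{2}v=r'+c_1'-\frac{s'}{2}\rho_X$ is \emph{not} an element of $v(K(X))$. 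The obstruction: for $\frac{1}{2}v$ to be a Mukai vector we would need its $\rho_X$-coefficient $-\frac{s'}{2}$ to be of the correct form, i.e. (running the computation in Remark \ref{uwaga_na_temat_prymitywnosci} backwards) we need $s'\equiv r'\bmod 2$ to fail. So $\frac{1}{2}v\notin v(K(X))$ iff $s'\not\equiv r'\bmod 2$, i.e. $r'+s'$ is odd, i.e. $r+s=2(r'+s')\equiv 2\bmod 4$. That gives the congruence in the second bullet. For the claim that $c_2$ is odd: from $s=\rk-2\chi$ and the Riemann–Roch expression for $\chi$ in terms of $\rk,c_1,c_2$ on an Enriques surface (for the relevant class with $\rk=r$), one expresses $c_2$ in terms of $r$, $c_1^2$ and $s$; since $4\mid c_1^2$ (as $2\mid c_1$ and the intersection form is even, in fact $c_1^2=4(c_1')^2$ with $(c_1')^2$ even), the parity of $c_2$ is pinned down by the parities of $r$ and $s$, and $r+s\equiv 2\bmod 4$ forces $c_2$ odd. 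I would carry out this last bookkeeping explicitly with the formula $\chi(E)=2+\frac12 c_1^2-c_2$ in the rank-$2$ normalization, adjusting the constant for general even $r$ via additivity of $v$.

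The one genuinely delicate point is matching conventions: the lemma writes $v=r+c_1-\frac{s}{2}\rho_X$ while Remark \ref{uwaga_na_temat_prymitywnosci} writes $v=2+c_1+t\rho_X$, so one must be careful that ``$-s/2$'' here plays the role of ``$t$'' there and that the sign and the factor $\frac12$ are handled consistently when translating ``divisible by $2$''. Everything else is elementary divisibility and parity chasing inside $\ZZ\oplus H^2(X,\ZZ)_f\oplus\frac12\ZZ\rho_X$, using only that $H^2(X,\ZZ)_f$ is torsion-free and that its intersection form is even. I expect no real obstacle beyond this bookkeeping; the structural input — that the lattice has no $p$-torsion for odd $p$ and that the half-integer in the last slot is the sole source of the exceptional divisor $2$ — is already packaged in the earlier remark.
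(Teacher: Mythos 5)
There is one genuine gap: your argument that $d:=\gcd(r,c_1,s)\in\{1,2\}$ breaks at its first step. The claim ``suppose $d\ge 3$; then in particular an odd prime $p$ divides $r$, $c_1$ and $s$'' is false for $d=4$ (or any higher power of $2$), so ruling out odd primes only shows that $d$ is a power of $2$, not that $d\le 2$. The case $4\mid\gcd(r,c_1,s)$ must be excluded separately, and the paper does exactly this as its last step: having shown that $2\mid\gcd(r,c_1,s)$ forces $c_2$ to be odd and hence $r+s\equiv r+c_1^2+s\equiv 2c_2\equiv 2\bmod 4$, it notes that $4\mid r$ and $4\mid s$ would give $4\mid r+s$, a contradiction. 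Your own ``$d=2$'' analysis in fact uses only $2\mid r$, $2\mid c_1$, $2\mid s$, so it already yields $r+s\equiv 2\bmod 4$ whenever $2\mid d$; you just need to run it before asserting $d\le 2$ (equivalently: if $4\mid r,c_1,s$ then $4\mid r+s$, the parity obstruction to $\frac{1}{2}v\in v(K(X))$ vanishes, and $v$ is divisible by $2$). Without this rearrangement the first assertion of the lemma is not proved.

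Apart from that, your proof is correct and is essentially the paper's: the same relation $c_2=(r+c_1^2+s)/2$, the same parity argument for the first bullet, and the second bullet obtained in the reverse order (the paper first shows $c_2$ must be odd --- otherwise $v/2$ is the Mukai vector of the Chern data $(r/2,\,c_1/2,\,c_2/2+c_1^2/8)$ --- and then deduces the congruence, while you derive the congruence from the lattice-membership obstruction and then read off the parity of $c_2$). Your verification that $v/p$ is a Mukai vector by inspecting the $\varrho_X$-coefficient is a legitimate substitute for the paper's explicit Chern data $(r/p,\,c_1/p,\,c_2/p+(p-1)c_1^2/(2p^2))$, but note one inconsistency: for odd $p$ you treat $v(K(X))$ as the full direct sum $\ZZ\oplus H^2(X,\ZZ)_f\oplus\frac{1}{2}\ZZ\rho_X$, whereas in the $d=2$ case you (correctly, and necessarily) use the finer condition that the $\varrho_X$-coefficient is congruent to half the rank modulo $\ZZ$. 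The finer condition is the right one throughout; for odd $p$ it holds automatically because $2p\mid r+s$, so your conclusion stands, but the membership check should be stated uniformly.
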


\begin{proof}
If $\gcd(r,c_1,s)=1$ and $2|\gcd(r,c_1)$ then $s=-r-c_1^2+2c_2$ is
even as well. If a prime number $p>3$ divides $\gcd(r,c_1,s)$ then
$p$ divides $r,c_1$ and $c_2=(r+c_1^2+s)/2$. This is also true for
$p=2$ if we assume that $c_2$ is even. In both these cases $v=pv'$
where $v'$ is a Mukai vector associated to $r'=r/p$, $c_1'=c_1/p$
and $c_2'=c_2/p+(p-1)c_1^2/(2p^2)$. This follows from the
equation:
\begin{eqnarray}
p\left(\frac{r'}{2}+\frac{1}{2}c_1'^2-c_2'\right)&=&p\left(\frac{r}{2p}+\frac{1}{2p^2}c_1^2
-\frac{r+c_1^2+s}{2p}+\frac{p-1}{2p^2}c_1^2\right) ={}\nonumber\\
&=&\frac{r}{2}+\frac{1}{2p}c_1^2-\frac{r}{2}-\frac{1}{2}c_1^2-\frac{s}{2}
+\frac{p-1}{2p}c_1^2=-\frac{s}{2} .\nonumber\end{eqnarray}
Therefore only $2$ can divide $\gcd(r,c_1,s)$ but only if $c_2$ is
odd. If $2|\gcd(r,c_1,s)$ then $r+s\equiv r+c_1^2+s\equiv
2c_2\equiv 2\bmod 4$. It follows that $4\nmid\gcd(r,c_1,s)$.
\end{proof}

\begin{Corollary} \label{Corollary_na_temat_gcd}
Let $v=r+r/2\delta+\xi-s/2\rho_X$ be a primitive Mukai vector,
where $r$ is even and $\delta\in H^2(X,\ZZ)_f$ is primitive. Then
$\gcd(r,\xi,s)$ equals to $1$ or $2$.
\end{Corollary}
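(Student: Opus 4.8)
The plan is to derive this directly from Lemma \ref{Lemma_na_temat_gcd}, applied with first Chern class $c_1:=\frac r2\delta+\xi$, together with a small amount of two-adic bookkeeping; in fact the primitivity of $\delta$ will play no role in the argument.

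Since $v=r+c_1-\frac s2\rho_X$ is a primitive Mukai vector, Lemma \ref{Lemma_na_temat_gcd} gives $d:=\gcd(r,c_1,s)\in\{1,2\}$, and moreover $d=2$ forces $r+s\equiv 2\bmod 4$. Writing $e:=\gcd(r,\xi,s)$, it suffices to show that $e$ is divisible neither by an odd prime nor by $4$, which gives $e\in\{1,2\}$. Suppose first that an odd prime $p$ divides $e$. Because $r$ is even and $p\mid r$ with $p$ odd, we get $p\mid\frac r2$, hence $p\mid\frac r2\delta$; combined with $p\mid\xi$ this yields $p\mid c_1$, so $p\mid\gcd(r,c_1,s)=d\le 2$, which is impossible. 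Suppose next that $4\mid e$. Then $4\mid r$, so $\frac r2$ is even and $2\mid\frac r2\delta$; since also $2\mid\xi$ we obtain $2\mid c_1$, and together with $2\mid r$ and $2\mid s$ this forces $d=2$. But then Lemma \ref{Lemma_na_temat_gcd} requires $r+s\equiv 2\bmod 4$, contradicting $4\mid r$ and $4\mid s$. Hence $e\in\{1,2\}$, as claimed.

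There is essentially no hard step here: the argument is a routine divisibility chase. The only points requiring a little care are the passage through the factor $\frac r2$ inside the first Chern class (so that odd-prime divisibility of $r$ transfers to $\frac r2\delta$, and divisibility of $r$ by $4$ only transfers divisibility by $2$), and the correct invocation of the congruence $r+s\equiv 2\bmod 4$ from Lemma \ref{Lemma_na_temat_gcd}, since it is ultimately the parity of $c_2$ (equivalently, of $s$ relative to $r$) that rules out the factor $4$.
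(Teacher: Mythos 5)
Your proof is correct and follows essentially the same route as the paper: reduce to Lemma \ref{Lemma_na_temat_gcd} applied to $c_1=\frac{r}{2}\delta+\xi$, rule out odd primes by transferring divisibility of $r$ through the factor $\frac{r}{2}$, and rule out $4$ via the congruence $r+s\equiv 2\bmod 4$. Your observation that the primitivity of $\delta$ is not used also matches the paper's argument.
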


\begin{proof}
Let $p>2$ be a prime number such that $p|\gcd(r,\xi,s)$. Then
$p|\gcd(r,r/2\delta+\xi,s)$ which equals $1$ or $2$. Suppose that
$4|\gcd(r,\xi,s)$. Then $\gcd(r,r/2\delta+\xi,s)=2$ and by the
above lemma $r+s\equiv 2\bmod 4$ which leads to a contradiction.
\end{proof}

\medskip

\subsection{Moduli spaces of sheaves of even rank on unnodal Enriques surfaces}

Let $H$ be an ample divisor on $X$. Let $v=r+c_1-(s/2)\varrho_X\in
H^*(X,\QQ)$ be a Mukai vector and let $L$ be a line bundle on $X$
such that $c_1(L)=c_1$. Then by $M_H(v, L)$ we denote the moduli
space of Gieseker $H$-semistable sheaves with Mukai vector $v$ and
with fixed determinant $L$. In the following for a fixed Mukai
vector $v$, $M_H(v,L)$ denotes $M_H(v,L)$ for some fixed line
bundle $L$ satisfying $c_1(L)=c_1(v)$. This will not cause any
problems since there are only two line bundles with the same first
Chern class and they differ  by a torsion line bundle so the
corresponding moduli spaces are isomorphic.

For a complex variety $Y$ the cohomology with compact support has
a natural mixed Hodge structure. This allows us to define the
\emph{virtual Hodge number} $e^{p,q}(Y)=\sum _k (-1)^k h^{p,q}
(H^k_c(Y, \QQ))$ and the \emph{virtual Hodge polynomial} $e(Y)=
\sum _{p,q} e ^{p,q} (Y) x^p y ^q$.

The moduli space $M_H(v, L)$ is constructed as a quotient of a
certain open subset $Q$ of the Quot-scheme by an action of $\GL
(V)$. Then  the rational function $$e(M_H(v, L))=e(Q)/e(\GL (V))$$
is well defined and we call it the \emph{virtual Hodge polynomial}
of $M_H(v,L)$. It is known that for a general polarization $H$ it
does not depend on the choice of $H$ (see \cite[Proposition
4.1]{Yo}).

In \cite{Yo} showed that for an unnodal Enriques surface if
$v=r+c_1-(s/2)\varrho_X\in H^*(X,\QQ)$ is a primitive Mukai vector
such that $r$ is odd  then the virtual Hodge polynomials
$e(M_H(v,L))$ and $e(\Hilb_X^{(\langle v^2\rangle+1)/2})$ are the
same for general $H$. We want to obtain a similar result for even
rank $r$.

The main ingredient of proof of Yoshioka's theorem is the
following proposition:

\begin{Proposition} \emph{(see \cite[Proposition 4.5]{Yo})}
\label{switch_rs} Let $X$ be an unnodal Enriques surface. Assume
that $r,s>0$. If $c_1^2<0$ then for a general polarization $H$ we
have $e(M_H(r+c_1-(s/2)\varrho_X, L ))=e(M_H(s-c_1-(r/2)\varrho_X,
L'))$.
\end{Proposition}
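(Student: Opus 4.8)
The plan is to realize the isometry $v=r+c_1-(s/2)\varrho_X\mapsto v'=s-c_1-(r/2)\varrho_X$ of the Mukai lattice by a Fourier--Mukai-type transform and to check that it preserves general semistability, so that it induces an isomorphism (or at least a piecewise algebraic bijection preserving virtual Hodge polynomials) of the corresponding moduli spaces. Concretely, on an unnodal Enriques surface one has a fine moduli space $Y$ of stable sheaves with some small Mukai vector $w$ (for instance a rank $1$ or rank $2$ vector), and a universal sheaf $\E$ on $Y\times X$ gives a Fourier--Mukai functor $\Phi_{\E}\colon D^b(X)\to D^b(Y)$. The key numerical point is that the induced map $\varphi$ on Mukai lattices is an isometry, and one selects $w$ (and the twist by a line bundle) exactly so that $\varphi$ sends $v$ to $\pm v'$ up to the action of $\exp(D)$ for a suitable divisor $D$; the freedom to modify $c_1$ by $r$ times anything, provided by Lemma \ref{primitive_element}, and to adjust by torsion line bundles, is what makes this matching possible while keeping the target Mukai vector primitive.

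The steps, in order: first I would record the precise action of the chosen Fourier--Mukai kernel on cohomology and solve the resulting linear-algebra problem to pin down $w$, $D$ and the sign so that $\Phi$ carries the $v$-data to the $v'$-data; here the hypotheses $r,s>0$ and $c_1^2<0$ (which via $\langle v^2\rangle=c_1^2+rs$ and Lemma \ref{Lemma_na_temat_gcd} / Corollary \ref{Corollary_na_temat_gcd} controls the gcd of $(r,c_1,s)$) guarantee that both $v$ and $v'$ lie in the "good" chamber where the construction applies and that primitivity is preserved. Second, I would show that for a general polarization $H$ on $X$ there is a general polarization $\hat H$ on $Y$ such that $\Phi$ (or a shift of it) takes $H$-semistable sheaves with Mukai vector $v$ to $\hat H$-semistable sheaves with Mukai vector $v'$; since $X$ is unnodal there are no $(-2)$-curves, so WIT-type vanishing holds uniformly and the transform of a semistable sheaf is again a sheaf (up to shift) with no higher cohomology sheaves, exactly as in Yoshioka's odd-rank argument. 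Third, using that the moduli spaces are fine and that $\Phi$ is an equivalence of derived categories, I would conclude that the two moduli spaces are isomorphic (or at least that the open strata match and the complements are stratified compatibly), hence have equal virtual Hodge polynomials; the independence of $e(M_H(v,L))$ from the generic $H$ quoted from \cite[Proposition 4.1]{Yo} then lets me drop any dependence on the auxiliary polarizations.

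The main obstacle I expect is the \textbf{even-rank bookkeeping on determinants and on the residual finite ambiguities}. In the odd-rank case one can normalize freely because $\gcd$ considerations are trivial and a universal sheaf on the auxiliary moduli space exists without obstruction; for even $r$ the gcd of $(r,c_1,s)$ can be $2$ (Lemma \ref{Lemma_na_temat_gcd}), the fixed-determinant condition interacts with the two-torsion of $\pic(X)$ (as already noted, $M_H(v,L)$ depends on $L$ only up to the torsion line bundle, which must be tracked through $\Phi$), and one must verify that the Fourier--Mukai transform respects the \emph{fixed-determinant} locus, not merely the fixed-$c_1$ locus. Pinning down $D$ and the torsion twist so that determinants match, and checking that the chosen auxiliary Mukai vector $w$ is itself primitive with the required parity so that $Y$ is a fine moduli space, is where the computation becomes genuinely more delicate than in \cite[Section 4]{Yo}; this is precisely the place where Lemma \ref{primitive_element} (with its quantitative control $2\langle x,\xi\rangle+r\langle\xi,\xi\rangle<M$) and Lemma \ref{known} are needed, and I would invoke them to produce the required $\xi$ and $\eta$ making all the numerical constraints simultaneously satisfiable.
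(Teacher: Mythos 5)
There is nothing in this paper to compare your proposal against: Proposition \ref{switch_rs} is stated with the tag ``see \cite[Proposition 4.5]{Yo}'' and is \emph{not proved here} --- it is imported wholesale from Yoshioka. So the only meaningful question is whether your text would stand on its own as a proof, and it would not. It is a plan in which every hard step is announced (``I would record\dots'', ``I would show\dots'', ``I would conclude\dots'') rather than carried out: no Fourier--Mukai kernel is actually specified, the cohomological action is never computed, and the central claim --- that for a general polarization $H$ the transform carries $H$-semistable sheaves with vector $r+c_1-(s/2)\varrho_X$ to semistable sheaves with vector $s-c_1-(r/2)\varrho_X$ --- is asserted, not proved. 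Your justification ``since $X$ is unnodal\dots WIT-type vanishing holds uniformly'' is exactly the point that fails in general: there are always loci of sheaves violating the relevant cohomology vanishing, and handling them is why Yoshioka's conclusion (and the statement here) is an equality of \emph{virtual} Hodge polynomials $e(M_H(v,L))=e(Q)/e(\GL(V))$ obtained by stratifying both sides, rather than an isomorphism of moduli spaces. The hypotheses $r,s>0$ and $c_1^2<0$ (equivalently $\langle v^2\rangle<rs$) enter precisely in controlling those strata, not, as you suggest, in ``keeping the target Mukai vector primitive'' --- note the proposition assumes no primitivity at all.

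A secondary but real confusion: the lattice lemmas you invoke (Lemma \ref{primitive_element}, Lemma \ref{known}, Lemma \ref{Lemma_na_temat_gcd}, Corollary \ref{Corollary_na_temat_gcd}) play no role in proving Proposition \ref{switch_rs}. In this paper they are used \emph{downstream}, in the proof of Theorem \ref{Main2}, to manufacture divisors $D$ and classes $\xi,\eta$ so that the proposition can be \emph{applied} repeatedly (to arrange $s>\langle v^2\rangle$, then $r>\langle v^2\rangle$, then drop to rank $2$ or $4$). Folding them into the proof of the proposition itself inverts the logical structure of the argument. If you want an honest proof of the proposition, you must reproduce (or cite and verify the hypotheses of) Yoshioka's argument in \cite[Section 4]{Yo}; as written, your proposal identifies the right general mechanism but leaves the entire mathematical content to be filled in.
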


Note that for a Mukai vector $v=r+c_1-(s/2)\varrho_X$ the
condition $(c_1^2)<0$ is equivalent to $\langle v^2\rangle<rs$.

\begin{Theorem} \label{Main2}
Let $X$ be an unnodal Enriques surface and let
$v=r+c_1-(s/2)\varrho_X\in H^*(X,\QQ)$ be a primitive Mukai vector
such that $r$ is even. Then for a general polarization $H$ we have
equality $e(M_H(v,L))=e(M_H(r'+c_1'-(s'/2)\varrho_X,L'))$, where
$r'$ is equal to either $2$ or $4$.
\end{Theorem}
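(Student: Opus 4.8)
The plan is to reduce the rank $r$ down to $2$ or $4$ by repeatedly applying the invariance results already available (Proposition \ref{switch_rs}, together with the $\exp(D)$-twist isometry and the invariance of $e(M_H(v,L))$ under change of general polarization), using the lattice lemmas of Subsection \ref{lattices} to keep the relevant Mukai vectors primitive at every step. The key structural point is that the Mukai lattice $v(K(X))$ contains the orthogonal summand $H^2(X,\ZZ)_f \cong \HH \perp -E_8$, so inside $\NS(X)$ we have a lot of room to modify $c_1$ by twisting without leaving the primitive locus.

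The steps I would carry out are: (1) Normalise. Since twisting $E$ by $\O_X(D)$ replaces $v$ by $v\exp(D)$, which is an isometry preserving primitivity and (for general $H$) the virtual Hodge polynomial, I may assume $c_1$ is "small" — concretely, using Lemma \ref{primitive_element} applied to the $-E_8$-part (and Lemma \ref{known} for the hyperbolic part, to adjust the $\rho_X$-coefficient), I can bring $v=r+c_1-(s/2)\varrho_X$ into a form where $c_1^2<0$; this is where the freedom "$2\langle x,\xi\rangle + r\langle\xi,\xi\rangle < M$" in Lemma \ref{primitive_element} is used. (2) Swap. Once $c_1^2<0$, i.e. $\langle v^2\rangle < rs$, Proposition \ref{switch_rs} gives $e(M_H(r+c_1-(s/2)\varrho_X,L)) = e(M_H(s-c_1-(r/2)\varrho_X,L'))$, trading the rank $r$ for the integer $s$. (3) Descend. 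The point is that by choosing the twist in step (1) appropriately I can also control $s$, arranging $0<s<r$ (or at least $s<r$ eventually), so that after the swap the new rank $s$ is strictly smaller than $r$. Since $r$ is even one must track the parity of the new rank: if $s$ is even we stay in the even-rank case and iterate; if $s$ becomes odd, Yoshioka's odd-rank theorem (quoted just before Proposition \ref{switch_rs}) finishes immediately, but to match the statement with $r'\in\{2,4\}$ I would instead keep $s$ even at each reduction. (4) Base case. Iterating, the even rank strictly decreases and stays positive, so it terminates at $r'=2$ or $r'=4$; the two cases $2$ and $4$ appear precisely because of the parity/divisibility dichotomy in Lemma \ref{Lemma_na_temat_gcd} and Corollary \ref{Corollary_na_temat_gcd} — when $\gcd(r,c_1,s)=2$ one cannot in general push below rank $4$ while keeping the vector primitive and the hypotheses of Proposition \ref{switch_rs} satisfied, whereas when the gcd is $1$ one reaches rank $2$.

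Concretely, at each stage I would write $r=2r_0$ and $c_1 = r_0\delta + \xi$ after a twist, use Corollary \ref{Corollary_na_temat_gcd} to see $\gcd(r,\xi,s)\in\{1,2\}$, then apply Lemma \ref{primitive_element} to replace $\xi$ by a primitive-making representative with $\xi^2$ as negative as needed so that $c_1^2<0$; apply Proposition \ref{switch_rs}; and check that the resulting vector $s-c_1-(r/2)\varrho_X$ is again primitive (using Lemma \ref{Lemma_na_temat_gcd}) and again admits such a twist. The induction is on the (even) rank.

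The main obstacle I expect is step (3) — simultaneously controlling $c_1^2<0$ (needed to invoke Proposition \ref{switch_rs}) and making the post-swap rank $s$ strictly smaller and of the right parity, all while preserving primitivity of the Mukai vector. Getting $c_1^2$ negative is easy (the $-E_8$ and hyperbolic parts are indefinite/negative), but after each swap the roles of $r$ and $s$ interchange, so one must set up the twist so that the descent is monotone; the bookkeeping of the parity of $\gcd(r,c_1,s)$, which dictates whether the terminal rank is $2$ or $4$, is the delicate part, and this is exactly what Lemma \ref{Lemma_na_temat_gcd} and Corollary \ref{Corollary_na_temat_gcd} are designed to handle. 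The remaining ingredients — polarization-independence of $e(M_H(v,L))$ for general $H$, and the $\exp(D)$ isometry — are quoted, so no further work is needed there.
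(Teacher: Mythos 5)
Your toolkit is the right one (twists by $\exp(D)$, Proposition \ref{switch_rs}, and the lattice lemmas), but the engine of your argument --- step (3), a monotone descent in which each swap replaces the rank $r$ by an $s$ with $0<s<r$ --- cannot work. Proposition \ref{switch_rs} requires $c_1^2<0$, which is equivalent to $rs>\langle v^2\rangle$, and $\langle v^2\rangle$ is invariant under every operation at your disposal (both $v\mapsto v\exp(D)$ and the swap $r+c_1-(s/2)\varrho_X\mapsto s-c_1-(r/2)\varrho_X$ preserve it). Hence as soon as $\langle v^2\rangle\geq r^2$ the two requirements $s<r$ and $rs>\langle v^2\rangle$ are incompatible, and no choice of twist can produce a smaller rank via Proposition \ref{switch_rs}; since the theorem must cover vectors with $\langle v^2\rangle$ arbitrarily large relative to $r$, the iteration you describe stalls. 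Note also that the terminal rank-$2$ or rank-$4$ vector necessarily has a very large $\chi$-slot (of size about $\langle v^2\rangle/2$), so any path to it through Proposition \ref{switch_rs} must pass through ranks \emph{larger} than $\langle v^2\rangle$, not smaller ones.

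The paper's proof goes in exactly the opposite direction. In the key special case $c_1=\frac{r}{2}bf+\xi$ it first uses Lemma \ref{primitive_element} (the clause $2\langle x,\xi\rangle+r\langle\xi,\xi\rangle<M$) to make $s>\langle v^2\rangle$, swaps so that the \emph{rank} now exceeds $\langle v^2\rangle$, then twists by an isotropic class $D=\sigma-\frac{(\eta^2)}{2}f+\eta$ chosen via Lemma \ref{known} so that the $\varrho_X$-coefficient becomes exactly $-\varepsilon/2$ with $\varepsilon\in\{2,4\}$; since the rank exceeds $\langle v^2\rangle$, the hypothesis $c_1^2<0$ holds automatically and one final swap lands on rank $\varepsilon$. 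The dichotomy $2$ versus $4$ arises from the divisibility constraint $2l\mid s-rb-2$ or $2l\mid s-rb-4$ in the choice of $\eta$ (where $l=\gcd(r,\xi)\in\{1,2\}$ by Corollary \ref{Corollary_na_temat_gcd}) --- close in spirit to your gcd heuristic, but with a concrete mechanism. Your proposal also omits the reduction of a general $c_1=d_1\sigma+d_2f+\xi$ to these special cases: the paper normalizes $-r/2<d_i\le r/2$, invokes Yoshioka's inductive step when $0<|d_i|<r/2$, and handles the corner case $(d_1,d_2)=(r/2,r/2)$ by passing to a different orthogonal splitting $\HH'\perp(-E_8)$ of $H^2(X,\ZZ)_f$. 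These pieces would all need to be supplied, and step (3) replaced, for your outline to become a proof.
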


\begin{proof}
To simplify notation we consider only the moduli spaces $M_H(v)$
without fixed determinant. This is sufficient since $M_H(v)$
consists of two disjoint isomorphic moduli spaces of type $M_H(v.
L)$.

We keep notation from the previous subsection: $H^2(X,\ZZ)_f =\HH
\perp -E_8$ and the canonical basis of $\HH$ is denoted by
$\{\sigma , f\}$.

To prove the theorem first we deal with the following special
case: $c_1=\frac{r}{2}b f+\xi$, where $b\in \{ 0,1, -1 \}$ and
$\xi \in -E_8$.  Let us set $l=\gcd(r,\xi )$. By Lemma
\ref{primitive_element} we can find $\xi_1\in -E_8$ such that
$(\xi+r\xi_1)/l$ is primitive and $s-2
(\xi,\xi_1)-r(\xi_1^2)>\langle v^2\rangle$. Therefore replacing
$v$ by $v\exp(\xi_1)$ we may assume that $\xi/l$ is primitive and
$s>\langle v^2\rangle$. Since $v$ is primitive, by Corollary
\ref{Corollary_na_temat_gcd} $\gcd(l,s)$ is equal to either $1$ or
$2$. Since $\xi/ l$  is primitive we have $\gcd (s, \xi
)=\gcd(l,s)$. Now by Proposition \ref{switch_rs} we get
$$e(M_H(r+c_1-(s/2)\varrho_X))=e(M_H(s-c_1-(r/2)\varrho_X)).$$
Note that $s$ is  even as $s+r =-2 \ch _2 (v)$. Replacing
$v=r+c_1-(s/2)\varrho_X$ by $v'=s-c_1-(r/2)\varrho_X$, we may
therefore assume that $r>\langle v^2\rangle$. By the above we may
also assume that $l=\gcd (r, \xi )$ is equal to $1$ or $2$ and
$\xi /l$ is primitive.

Let us set $D=\sigma-\frac{(\eta^2)}{2}f+\eta$, where $\eta$ is
some element of $-E_8$ (note that $(\eta ^2)/2$ is an integer as
$-E_8$ is an even lattice). Then $D^2=0$ and
$$(c_1,D)=\frac{r}{2}b+(\xi, \eta).$$
Let us choose $\eta\in -E_8$ which satisfies the following
conditions:
\begin{itemize}
\item $2(\eta, \xi )=s-2- r b$ if $l=2$ and $4|s-rb-2$ or $l=1$,
\item $2(\eta,\xi )=s-4-rb $ if $l=2$ and $4|s-rb$.
\end{itemize}
Existence of such $\eta$ follows from Lemma \ref{known} because
$\xi /l$ is primitive and the above equalities are equivalent to
$(\eta,\xi /l) =\frac{(s-2-rb)}{2l}$ or $\frac{(s-4-rb)}{2l}$,
respectively.

Then we have
$$v\exp(D)=r+(c_1+rD)+\frac{1}{2}\left( rD^2+2(c_1,D)-s\right)\rho_X
=r+(c_1+rD)-\frac{\varepsilon}{2}\rho_X,$$ where $\varepsilon$ is
equal to either $2$ or $4$. Since $r>\langle v^2\rangle$ we can
use Proposition \ref{switch_rs} once again to obtain
$$e(M_H(v))=e(M_H(v\exp(D)))=e\left(M_H\left(\varepsilon-(c_1+rD)-\frac{r}{2}\rho _X\right)\right).$$
This proves the required assertion in this case.

Analogously exchanging $\sigma$ with $f$ we can deal with the case
$c_1=\frac{r}{2}a \sigma+\xi$, where $a\in \{ 0,1, -1 \}$ and $\xi
\in -E_8$.

\medskip

Now we use induction on $r$ to prove the theorem in the general
case. This part is very similar to the second part of proof of
\cite[Theorem 4.6]{Yo}. Let us write $c_1$ as $d_1\sigma +d_2f
+\xi$ for some $\xi \in -E_8$. Replacing $v$ by $v\exp (k\sigma +l
f)$ we can assume that $-r/2<d_1\le {r/2}$ and $-r/2<d_2\le r/2$.
If $d_1$ is non-zero and $|d_1|<r/2$ then following Yoshioka's
proof we can reduce the assertion to lower rank and use the
induction assumption. Similarly, we deal with the cases when $d_2$
is non-zero and $|d_2|<r/2$. So the only cases that we are left
with are when the pair $(d_1,d_2)$ is equal to $(0,0), (0,r/2),
(r/2,0)$ or $(r/2,r/2)$. But we already proved the theorem in
three of these cases and the only case that is left is
$(d_1,d_2)=(r/2,r/2)$.

In this case we have $c_1=\frac{r}{2}\sigma + \frac{r}{2} f+\xi$
for some $\xi \in -E_8$. To deal with this case we need to
consider another orthogonal decomposition of the lattice $H^2(X,
\ZZ) _f$. Namely, if $\{e_1, \dots , e_8\}$ denotes the canonical
basis of $-E_8$ then we set $\sigma'=\sigma$, $f'=\sigma +f +e_1$,
$e_1'=e_1+2f$ and $e_i'=e_i$ for $i=2,\dots , 8$. Then $\HH '=\ZZ
\sigma ' \oplus \ZZ f'$ is a hyperbolic plane and its orthogonal
complement in $H^2(X, \ZZ)_f$ is isometric to $-E_8$ with
canonical basis $\{e_1', \dots ,e_8' \}$. Let us write $c_1$ in
this new decomposition as $a \sigma'+b f'+\xi'$ for $\xi' \in
-E_8$. Comparing coefficients at $\sigma$ we see that
$a+b=\frac{r}{2}$. This reduces the problem to the already
considered case.
\end{proof}

\medskip

Similar but much simpler considerations lead to another proof of
the following reformulation of Kim's theorem:

\begin{Theorem}\label{Kim-thm}  \emph{(\cite[Theorem]{Kim1})}
Let $v=2+c_1+t\varrho_X\in H^*(X,\QQ)$ be a rank $2$ Mukai vector.
Then there exists a divisor $D$ such that for some $c_1'\in H^2(X,
\ZZ )_f$ we have $v\exp(D)=2+c_1'+0\varrho_X$ or
$v\exp(D)=2+c_1'+\varrho_X$.
\end{Theorem}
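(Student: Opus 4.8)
The plan is to track what the operation $v \mapsto v\exp(D)$ does to the rank and to $\ch_2$, and to use the structure of $H^2(X,\ZZ)_f = \HH \perp -E_8$ to find a divisor $D$ making the $\rho_X$-coefficient land in $\{0,1\}$. Write $v = 2 + c_1 + t\varrho_X$ and recall $v\exp(D) = 2 + (c_1 + 2D) + (t + (c_1,D) + D^2)\varrho_X$, so the rank stays $2$ while the $\varrho_X$-coefficient changes by $(c_1,D) + D^2$. So the task reduces to: find $D$ with $(c_1,D) + D^2 \equiv t \pmod{1}$ in the appropriate sense — more precisely, make $t + (c_1,D) + D^2$ equal to $0$ or $1$.

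First I would fix the hyperbolic plane $\HH$ with basis $\{\sigma, f\}$, $\sigma^2 = f^2 = 0$, $(\sigma,f) = 1$, and write $c_1 = d_1\sigma + d_2 f + \xi$ with $\xi \in -E_8$. Taking $D = \alpha\sigma + \beta f$ gives $D^2 = 2\alpha\beta$ and $(c_1,D) = \alpha d_2 + \beta d_1$, so the $\varrho_X$-coefficient becomes $t + \alpha d_2 + \beta d_1 + 2\alpha\beta$. If either $d_1$ or $d_2$ is odd, then one of the two linear-in-$\alpha$ or linear-in-$\beta$ terms lets me adjust the coefficient by an arbitrary integer (fix the other variable, vary the one multiplied by the odd coefficient), so I can hit any target value, in particular $0$; this handles the case where $c_1$ is not divisible by $2$ in the $\HH$-part. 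The remaining obstruction is the parity when $d_1, d_2$ are both even and $\xi$ contributes only even amounts: then $\alpha d_2 + \beta d_1 + 2\alpha\beta$ is always even, so I can only change $t$ by an even integer and can land on $0$ or $1$ depending on the parity of $t$.

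The main point is therefore to show that the parity of $t$ is pinned down exactly when $c_1$ is divisible by $2$. This is precisely Remark \ref{uwaga_na_temat_prymitywnosci}: $v = 2 + c_1 + t\varrho_X$ is non-primitive iff $c_1 \in 2H^2(X,\ZZ)_f$ and $t$ is odd; equivalently, when $c_1$ is divisible by $2$, primitivity forces $t$ even (but here we don't even need primitivity — we just need to control $t$ modulo $2$ via $D$). Concretely: if $c_1$ is not divisible by $2$, write $c_1 = 2c_1' + e$ for some $e \in H^2(X,\ZZ)_f$ not divisible by $2$; since $H^2(X,\ZZ)_f$ is unimodular there is $\eta$ with $(e,\eta)$ odd, and then $D = \eta$ changes $t$ by $(c_1,\eta) + \eta^2 = (e,\eta) + \text{even}$, which is odd — so by composing with a further $D$ of the form $\alpha\sigma + \beta f$ (using the odd linear term, or iterating) I can reach both parities, hence $0$. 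If $c_1 = 2c_1'$ is divisible by $2$, then for every divisor $D$ the quantity $(c_1,D) + D^2 = 2(c_1',D) + D^2$ is even since $D^2$ is even ($H^2(X,\ZZ)_f$ being an even lattice), so $t$ can only be shifted by an even integer; thus $v\exp(D)$ has $\varrho_X$-coefficient $\equiv t \pmod 2$, and we take $D$ realizing the value $0$ if $t$ is even and $1$ if $t$ is odd. The slightly delicate step is producing $D$ that achieves the exact value (not just the right parity): once the parity is matched, pick $D = \alpha\sigma$ alone — this shifts $t$ by $\alpha d_2$ where $d_2$ is the $f$-coefficient of $c_1$; if $d_2 \ne 0$ this already gives full control modulo $d_2$, and combining with $\beta f$ (shift by $\beta d_1$) and the cross term $2\alpha\beta$ one checks the reachable set of shifts is all of $\ZZ$ when $(d_1,d_2) \ne (0,0)$, and is $2\ZZ$ otherwise — and the $(d_1,d_2)=(0,0)$ subcase is exactly where $c_1 \in -E_8$, handled by the unimodularity argument above. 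This case analysis, though elementary, is the part that needs to be carried out carefully.
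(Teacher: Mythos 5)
Your proposal is correct and proves the theorem by a genuinely more elementary route than the paper. The paper's argument (which follows the template of the proof of Theorem \ref{Main2}) first normalizes $c_1$ by $v\mapsto v\exp(k\sigma+lf)$, splits into the cases where $v$ is primitive or not, and in each case produces an isotropic class $D=\sigma-\frac{(\eta^2)}{2}f+\eta$ with $\eta\in -E_8$ chosen via Lemmas \ref{primitive_element} and \ref{known} so that $(c_1,D)$ takes a prescribed value. You instead observe directly that the $\varrho_X$-coefficient of $v\exp(D)$ is $t+(c_1,D)+D^2$ and analyze which shifts $(c_1,D)+D^2$ are realizable; this avoids the primitive/non-primitive dichotomy and both lattice lemmas (the only lattice input is the trivial unimodularity fact that a class not divisible by $2$ pairs oddly with something). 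In fact your argument compresses further: for $D=\alpha\sigma+\beta f$ the shift equals $\alpha(d_2+2\beta)+\beta d_1$, and choosing $\beta=(1-d_2)/2$ when $d_2$ is odd (or arguing symmetrically in $d_1$) realizes every integer, while choosing $\beta=(2-d_2)/2$ when $d_1,d_2$ are both even realizes every even integer. So the set of realizable shifts always contains $2\ZZ$, and since exactly one of $-t$, $1-t$ is even, the $\varrho_X$-coefficient can always be driven into $\{0,1\}$. That single observation already proves the theorem, with no case analysis on the divisibility of $c_1$ at all.

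Two intermediate claims in your write-up are inaccurate, though neither affects the conclusion. First, ``fix the other variable, vary the one multiplied by the odd coefficient'' does not by itself produce all integers: with $\beta$ fixed the shifts form an arithmetic progression of common difference $d_2+2\beta$, so one must first choose $\beta$ so that $d_2+2\beta=\pm1$, which is possible exactly when $d_2$ is odd. Second, the assertion that the reachable set of shifts is all of $\ZZ$ whenever $(d_1,d_2)\neq(0,0)$ is false: for $(d_1,d_2)=(2,0)$ the shift from $\HH$ is $2\beta+2\alpha\beta$, which is always even. The correct dichotomy (allowing an $-E_8$-component of $D$ as well) is that the reachable set is $\ZZ$ if $c_1\notin 2H^2(X,\ZZ)_f$ and $2\ZZ$ otherwise. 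Because the theorem only asks the coefficient to land in $\{0,1\}$ rather than to equal $0$, the $2\ZZ$-coverage suffices and these slips are harmless here; they would matter only for the sharper question of when the value $0$ itself is attainable.
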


\bigskip

{\sc Address:}\\
Institute of Mathematics, Warsaw University, ul.\ Banacha 2,
02-097 Warszawa, Poland,\\

\end{document}